\crefname{hypothesis}{Hypothesis}{Hypotheses}
\crefname{fact}{Fact}{Facts}
\title{Uniformly accurate structure-preserving neural surrogates for radiative transfer
\thanks{Submitted to the editors DATE: \today.
\funding{Jingrun Chen is supported by NSFC Major Research Plan -  Interpretable and General-purpose Next-generation Artificial Intelligence (No 92370205), NSFC grant 12425113, and the Key Laboratory of the Ministry of Education for Mathematical Foundations and Applications of Digital Technology, University of Science and Technology of China. 
Keke Wu is supported by the China Postdoctoral Science Foundation under Grant Number 2025M773105 and the Jiangsu Funding Program for Excellent Postdoctoral Talent.
}}}
\author{
Mengjia Bai\thanks{School of Mathematical Sciences, Fudan University, Shanghai, China
(\email{bai\_mj@fudan.edu.cn}).}
\and Jingrun Chen\thanks{School of Mathematical Sciences and Suzhou Institute for Advanced Research, University of Science and Technology of China, and Suzhou Big Data \& AI Research and Engineering Center, China (\email{jingrunchen@ustc.edu.cn}).}
\and Keke Wu\thanks{School of Artificial Intelligence and Data Science and Suzhou Institute for Advanced Research, University of Science and Technology of China, and Suzhou Big Data \& AI Research and Engineering Center, China (\email{wukekever@ustc.edu.cn}).}
}
\def\bx{\boldsymbol{x}}
\crefname{assumption}{Assumption}{Assumption} 
\begin{document}
\maketitle
\begin{abstract}
In this work, we propose a uniformly accurate, structure-preserving neural surrogate for the radiative transfer equation with periodic boundary conditions based on a multiscale parity decomposition framework. The formulation introduces a refined decomposition of the particle distribution into macroscopic, odd, and higher-order even components, leading to an asymptotic-preserving neural network system that remains stable and accurate across all parameter regimes. By constructing key higher-order correction functions, we establish rigorous uniform error estimates with respect to the scale parameter $\varepsilon$, which ensures $\varepsilon$-independent accuracy. Furthermore, the neural architecture is designed to preserve intrinsic physical structures such as parity symmetry, conservation, and positivity through dedicated architectural constraints. The framework extends naturally from one to two dimensions and provides a theoretical foundation for uniformly accurate neural solvers of multiscale kinetic equations. Numerical experiments confirm the effectiveness of our approach.

\end{abstract}

\begin{keywords}
    Uniformly accurate, structure-preserving, asymptotic-preserving, radiative transfer
\end{keywords}

\begin{MSCcodes}
    82B40, 35Q20, 45K05, 82C40, 65N12
\end{MSCcodes}

\section{Introduction}
Radiative transfer describes the propagation and interactions of radiation or particles within participating media~\cite{modest2021radiative,larsen1987asymptotic}. 
This field of study has broad applications across disciplines such as medical imaging~\cite{suetens2017fundamentals,klose2002optical1,klose2002optical2}, cancer therapy~\cite{baskar2012cancer,ren2006frequency,arridge2009optical}, and astrophysical flows~\cite{mihalas2013foundations}.
It is also essential for analyzing energy transfer in engineering systems -- for example, neutron transport in fission reactors~\cite{davison1958neutron} or photon-mediated heating in inertial confinement fusion capsules~\cite{lewis1984computational}.

The numerical solution of the radiative transfer equation (RTE), however, poses several well-known challenges.
First, the solution is defined over a high-dimensional phase space, typically encompassing three spatial coordinates and three momentum (or angular) variables, and it may also exhibit time dependence. 
Second, the interaction between radiation and matter can be highly complex, especially when the material properties themselves evolve. 
Third, spatial and temporal heterogeneities in collision rates, arising from strongly energy-dependent processes or material discontinuities, introduce multiple characteristic scales. 
To address these difficulties, various numerical strategies have been developed, broadly categorized into deterministic and stochastic methods. Among deterministic approaches, the discrete ordinates method (DOM/DVM), also known as the $S_N$ method, is widely adopted. It discretizes the angular variables and solves the RTE along discrete directions~\cite{lewis1984computational,adams2002fast}. 
Spherical harmonics methods, by contrast, offer the advantage of rotational invariance and have been extensively applied to radiative transfer problems~\cite{marshak1947note,evans1998spherical}.
Given that the characteristic scale parameter (e.g., Knudsen number) in the RTE can span regimes from kinetic to diffusive, it is essential for numerical methods to handle multiscale behavior robustly. Two major classes of approaches have been proposed to address this challenge: domain decomposition-based methods and asymptotic-preserving (AP) schemes. Domain decomposition-based methods partition the computational domain into subregions, in which different governing equations are solved and coupled through interface conditions~\cite{golse2003domain}. AP schemes, on the other hand, are designed to maintain uniform stability and accuracy across all scale regimes~\cite{jin2010review}. 
For stochastic approaches, the direct simulation Monte Carlo (DSMC) method has been widely used for RTE computations~\cite{alexander1997direct}. Numerous other computational techniques have also been proposed, addressing aspects such as implicit time integration, high-order accuracy, and efficient handling of multiscale effects~\cite{frank2007approximate,li2017implicit,shi2023efficient,xiong2022high,liu2023implicit,han2014two}. 

Recently, there has been growing interest in leveraging deep neural network methods to solve RTEs~\cite{mishra2021physics,chen2022,lu2022solving,jin2023asymptotic,wuAPNNv2,wu2023asymptotic,li2022model,li2025macroscopic,chen2025micro,wu2026}. 
When dealing with RTEs, the vanilla physics-informed neural networks (PINNs)~\cite{raissi2019physics} often suffer from numerical instability arising from the presence of small-scale features~\cite{jin2023asymptotic}.
A key strategy for addressing the multiscale nature of RTEs lies in the careful design of a uniformly accurate loss function that captures the macroscopic limiting behavior. This approach, known as asymptotic-preserving neural networks (APNNs)~\cite{jin2023asymptotic}, is illustrated in Fig.~\ref{fig:apnn}.
\begin{figure}[htbp]
    \centering
    \includegraphics[width=0.5\textwidth]{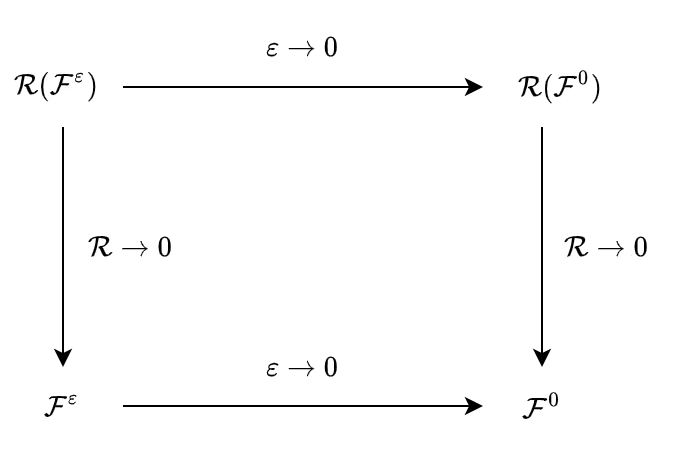}
    \caption{Schematic diagram of the asymptotic-preserving neural networks  (adapted from~\cite{jin2023asymptotic}). 
        Assume $\mathcal{F^{\varepsilon}}$ is the multi-scale model that depends on the
        scale parameter $\varepsilon$ and $\mathcal{F}^{0}$ is the corresponding asymptotic limit model as $\varepsilon \to 0$. Define $\mathcal{R}(\mathcal{F^{\varepsilon}})$ as the neural network-based least-squares formulation of the model $\mathcal{F^{\varepsilon}}$. If $\mathcal{R}(\mathcal{F^{\varepsilon}})$ converges to 
        $\mathcal{R}(\mathcal{F}^{0})$ as $\varepsilon \to 0$, and this limit is precisely the least-squares formulation of the limit model $\mathcal{F}^{0}$, then
        the method is called asymptotic-preserving.
    }
    \label{fig:apnn}
\end{figure}
The APNN framework has been further generalized to address a wide range of problems, such as hyperbolic systems~\cite{bertaglia2022asymptotic2}, the semiconductor Boltzmann equation~\cite{liu2025asymptotic}, the Vlasov–Poisson–Fokker–Planck equation~\cite{jin2024asymptotic,zhang2025ap}, and the Boltzmann equation~\cite{chen2025structure}, thereby providing compelling evidence of its robustness and versatility.

While numerical methods have shown remarkable performance in multiscale kinetic problems, the lack of rigorous analytical justification underscores the need for a deeper theoretical understanding -- this serves as the primary motivation for our study. To the best of our knowledge, existing theoretical analyses have predominantly been developed within the micro–macro decomposition framework.
For instance, \cite{lu2022solving} established a uniform error estimate with respect to the Knudsen number for the steady RTE, whereas \cite{abdo2024error} provided a rigorous proof of the AP property of APNNs applied to the Boltzmann equation. 
Other works, such as \cite{chen2022} and \cite{li2022model}, studied the linear transport equation and gray RTEs, though their analyses are not uniform across regimes. More recently, \cite{chen2025structure} established rigorous statistical estimates for the multiscale control variate method, further advancing the theoretical understanding of such multiscale formulations.

While these efforts have significantly deepened insights into the micro–macro paradigm, our work instead draws inspiration from the parity decomposition framework.
We introduce a novel multiscale parity decomposition and establish a uniform error estimate for the approximate solution with respect to $\varepsilon$. A key refinement in our formulation is the introduction of a new quantity of $\mathcal{O}(\varepsilon^2)$ -- the deviation from local equilibrium, defined in the one-dimensional case as $\varepsilon^2 w = r - \rho$ (see Section~\ref{Sec: One-dimensional method}). The two-dimensional extension involves additional subtleties, which are discussed in detail in Section~\ref{Sec: Two-dimensional method}.
This refinement leads to a fundamental shift in the structure of our error analysis, where the central analytical object becomes the error in the refined variable set $(\rho, w, j)$.

\subsection{Preliminaries}
Consider the time-dependent RTE. Let $f(t,\boldsymbol{x},\boldsymbol{v})$ be the probability density distribution for particles at space point $\boldsymbol{x}\in\mathcal{D}\subset \mathbb{R}^d$, time $t$, and travel in direction $\boldsymbol{v}\in\Omega\subset\mathbb{S}^d$, with $\int_\Omega\mathrm{d}\boldsymbol{v}=S$. Here $\Omega$ is symmetric in $\boldsymbol{v}$, meaning that $\int_\Omega g(\boldsymbol{v})\mathrm{d}\boldsymbol{v}=0$ for any function $g$ odd in $\boldsymbol{v}$. Then $f$ solves the RTE
\begin{equation}\label{general linear transport equation}
    \varepsilon\partial_t f
    +
    \boldsymbol{v}\cdot\nabla_{\bx} f
    =
    \frac{1}{\varepsilon}\Big(\frac{\sigma_S}{S}\int_\Omega f\mathrm{d}\boldsymbol{v}'-\sigma f\Big)+\varepsilon Q,
\end{equation}
where $\sigma=\sigma(\boldsymbol{x})$ is the total transport coefficient, $\sigma_S=\sigma_S(\boldsymbol{x})$ is the scattering coefficient, $Q=Q(\boldsymbol{x})$ is the source term. In this model, the parameter $\varepsilon > 0$ is the Knudsen number, which denotes the ratio of the mean free path to a characteristic length. In particular, $\varepsilon\sim \mathcal{O}(1)$ refers to kinetic regime, and $\varepsilon\ll 1$ corresponds to the diffusive regime. Typically,
\begin{equation}
    \sigma_S
    =
    \sigma-\varepsilon^2\sigma_A,
\end{equation}
where $\sigma_A=\sigma_A(\boldsymbol{x})$ is the absorption coefficient. Such an equation arises in neutron transport, wave propagation in random media. In all these applications, the scaling appeared in \eqref{general linear transport equation}, and gives rise to a diffusion equation as $\varepsilon\rightarrow0$, which is
\begin{equation}\label{eq:diffusion-limit}
    \partial_t \rho = D\nabla_{\bx} \cdot \left ( \frac{1}{\sigma} \nabla_{\bx} \rho \right ) - \sigma_A\,\rho + Q\,,\quad \rho= \frac{1}{S}\int_\Omega f \mathrm{d}\boldsymbol{v}.
 \end{equation}
 \begin{remark}
   Different diffusion coefficient $D$ in \eqref{eq:diffusion-limit} appears for different collision operators. For example, $D = 1/3$ in one-dimensional slab geometry as shown in Section \ref{Sec: One-dimensional method}, and $D = 1/2$ when $\Omega$ is a unit sphere in two dimensions, as will be
discussed in Section \ref{Sec: Two-dimensional method}.  
\end{remark}

In the next sections, for the sake of clarity, we will derive the frameworks for one-dimensional and two-dimensional equations in the simplified situation where $Q=0,\ \sigma_A=0$, and $\sigma=\sigma_S=1$. The extension to the general case does not present additional difficulties and only needs some smooth assumptions. 

\subsection{Main results}
Our main theoretical result is a uniform error estimate for the proposed multiscale parity decomposition framework. 
\begin{theorem}
Let $\mathcal{R}_{\mathrm{APNN}}^\varepsilon$ represent the physics-informed loss based on the residual of the multiscale parity-decomposition system, 
and let $\mathcal{R}_{\mathrm{total}}^\varepsilon$
denote the error between the numerical solution and the exact solution. Then it follows that
\begin{equation*}
  \mathcal{R}_{\mathrm{total}}^\varepsilon
  =
  \mathcal{O}\Big(\sqrt[3]{\big(\mathcal{R}^\varepsilon_{\mathrm{APNN}}\big)^{2}}\Big), \quad \mbox{uniformly in }
  \varepsilon.
\end{equation*}
\end{theorem}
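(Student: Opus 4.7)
The plan is to derive an error system for the triple $(\rho, j, w)$ by subtracting the exact multiscale parity-decomposed system from the neural-network residual system. Denote the NN approximations by $(\rho_\theta, j_\theta, w_\theta)$ and set $e_\rho = \rho_\theta - \rho$, $e_j = j_\theta - j$, $e_w = w_\theta - w$. Each error equation has the same linear structure as the original, with forcing equal to the corresponding NN residual $R_i$; by construction $\sum_i \|R_i\|_{L^2}^2$ is bounded (up to standard sampling constants) by $\mathcal{R}^\varepsilon_{\mathrm{APNN}}$.

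Next, I would carry out a weighted energy estimate: test the $\rho$-equation by $e_\rho$, the $j$-equation by $c_1 e_j$, and the $w$-equation by $c_2 e_w$, where the weights $c_1, c_2$ are chosen as powers of $\varepsilon$ so that the macro--micro coupling terms cancel after summation, leaving only coercive dissipation from the collision operator acting on $e_j$ and $e_w$. The refined correction $\varepsilon^2 w = r - \rho$ is crucial: without it, an $\mathcal{O}(\varepsilon^{-1})$ residual would appear in the macroscopic equation that the dissipation cannot absorb; with it, this leading-order term cancels, rendering the remaining coupling $\mathcal{O}(1)$. Using parity cancellation, the zero-average constraint on $w$, Cauchy--Schwarz, and Gronwall's inequality, I expect a linear-in-$\mathcal{R}^\varepsilon_{\mathrm{APNN}}$ control on a weak ($L^2$-type) norm of the error, uniformly in $\varepsilon$.

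To pass from this weak bound to the $2/3$ exponent in the statement, I would combine it with an a priori higher-regularity bound on both the exact solution and the NN output (assumed uniform in $\varepsilon$ under suitable smoothness hypotheses) and apply an interpolation inequality of the form $\|e\|_{\mathrm{target}}^{3} \lesssim \|e\|_{L^2}^{2} \cdot \|e\|_{H^s}$. Since the higher-norm factor is $\mathcal{O}(1)$, substituting $\|e\|_{L^2}^{2} \lesssim \mathcal{R}^\varepsilon_{\mathrm{APNN}}$ yields the desired $\mathcal{R}^\varepsilon_{\mathrm{total}} = \mathcal{O}\big((\mathcal{R}^\varepsilon_{\mathrm{APNN}})^{2/3}\big)$.

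The main obstacle is the $\varepsilon$-bookkeeping: the stiff residual system causes a naive energy test to lose factors of $\varepsilon^{-1}$ in several places. Identifying the correct weights $c_1, c_2$, the matching higher-order correction functions promised in the introduction, and the precise cancellation pattern so that every dangerous $\varepsilon^{-1}$ term is neutralized is the core technical content of the argument, and is precisely what the refined parity decomposition is designed to enable. A secondary challenge is choosing the a priori higher-order norm so that it is genuinely $\varepsilon$-uniform, since standard $H^s$ bounds on kinetic solutions often degenerate as $\varepsilon \to 0$; verifying that the hypothesized smoothness holds compatibly with the decomposition is what ultimately controls the $1/3$-factor in the interpolation step.
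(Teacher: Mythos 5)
Your energy-estimate plan for the triple $(\rho,j,w)$ is indeed the core of the paper's refined analysis, but your proposal contains a gap in how the exponent $2/3$ is produced, and the two halves of your argument do not fit together. First, the weighted energy estimate does \emph{not} deliver a bound that is linear in $\mathcal{R}^\varepsilon_{\mathrm{APNN}}$ and uniform in $\varepsilon$ with nothing left over: after the parity and periodicity cancellations, the $w$-equation still produces the cross term $\varepsilon^2\int\langle v\partial_x\tilde{j}\rangle\tilde{w}$, which must be split as $\tfrac{\varepsilon^2}{2}\|\partial_x\tilde{j}\|^2+\tfrac{\varepsilon^2}{2}\|\tilde{w}\|^2$; the factor $\|\partial_x\tilde{j}\|^2$ is an $H^1$ quantity that is not controlled by the loss and must be absorbed into an a priori constant, leaving an irreducible additive $\mathcal{O}(\varepsilon^2)$ in the final estimate. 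So the refined bound is $\mathcal{R}_{\mathrm{total}}^\varepsilon\leq C(\mathcal{R}^\varepsilon_{\mathrm{APNN}}+\varepsilon^2)$, not $C\,\mathcal{R}^\varepsilon_{\mathrm{APNN}}$. If your claimed linear uniform bound were actually attainable, the theorem would already follow in a stronger form and your interpolation step would be superfluous; the fact that you need a further step is a signal that the energy estimate alone cannot close the argument.

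Second, the interpolation inequality $\|e\|_{\mathrm{target}}^3\lesssim\|e\|_{L^2}^2\|e\|_{H^s}$ cannot supply the exponent here, because $\mathcal{R}_{\mathrm{total}}^\varepsilon$ is itself defined as the squared $L^2$ norm of $f-f_\theta$; with target norm equal to $L^2$ the inequality collapses to a triviality and yields no gain. The actual mechanism in the paper is different: one proves \emph{two} complementary estimates, the crude direct bound $\mathcal{R}_{\mathrm{total}}^\varepsilon\leq C\varepsilon^{-1}\mathcal{R}^\varepsilon_{\mathrm{APNN}}$ (obtained by testing the reassembled equation for $\tilde{f}=\tilde{\rho}+\varepsilon\tilde{j}+\varepsilon^2\tilde{w}$ against $\tilde{f}$, where the source term $\varepsilon d_\theta^{(2)}$ forces the $\varepsilon^{-1}$ loss) and the refined bound $C(\mathcal{R}^\varepsilon_{\mathrm{APNN}}+\varepsilon^2)$ above. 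Taking the minimum of the two and maximizing over $\varepsilon$ gives the crossover at $\varepsilon=\mathcal{O}\big((\mathcal{R}^\varepsilon_{\mathrm{APNN}})^{1/3}\big)$, whence the uniform bound $\mathcal{O}\big((\mathcal{R}^\varepsilon_{\mathrm{APNN}})^{2/3}\big)$. Your proposal omits the crude $\varepsilon^{-1}$ estimate entirely, so the balancing argument that actually generates the $2/3$ is missing.
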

The results conclusively demonstrate that the proposed multiscale parity decomposition framework converges uniformly in the Knudsen number $\varepsilon$.

In fact, as shown in Theorems \ref{Thm: convergence of the APNN solution-classical} and \ref{Thm: convergence of the APNN solution-classical 2D}, a direct numerical analysis gives the error estimate:
\begin{equation*}
    \mathcal{R}_{\mathrm{total}}^\varepsilon
    =
    \mathcal{O}\Big(\frac{\mathcal{R}^\varepsilon_{\mathrm{APNN}}}{\varepsilon}\Big):=\mathcal{E}_1.
\end{equation*}
However, this result exhibits a scaling of $\varepsilon^{-1}$. which becomes unbounded as $\varepsilon\rightarrow 0$. In contrast, the multiscale parity decomposition framework enables a refined analysis, leading to the estimate:
\begin{equation*}
    \mathcal{R}_{\mathrm{total}}^\varepsilon
    =
    \mathcal{O}\big(\mathcal{R}^\varepsilon_{\mathrm{APNN}}+\varepsilon^2\big):=\mathcal{E}_2,
\end{equation*}
established in Theorems \ref{Thm: convergence of the APNN solution} and \ref{Thm: convergence of the APNN solution-2D}.
 Both $\mathcal{E}_1$ and $\mathcal{E}_2$ are mathematically valid and hold simultaneously in this framework. A combination of the two estimates leads to a composite bound:
\begin{equation*}
    \mathcal{R}_{\mathrm{total}}^\varepsilon\leq
    \min\{\mathcal{E}_1,\mathcal{E}_2\},
\end{equation*}
which achieves its upper bound at $\varepsilon=\mathcal{O}\big(\sqrt[3]{\mathcal{R}^\varepsilon_{\mathrm{APNN}}}\big)$, as illustrated in Fig. \ref{fig:aprfm}. Therefore, we obtain the uniform convergence result.

\begin{figure}[htbp]
    \centering
    \includegraphics[width=0.5\textwidth]{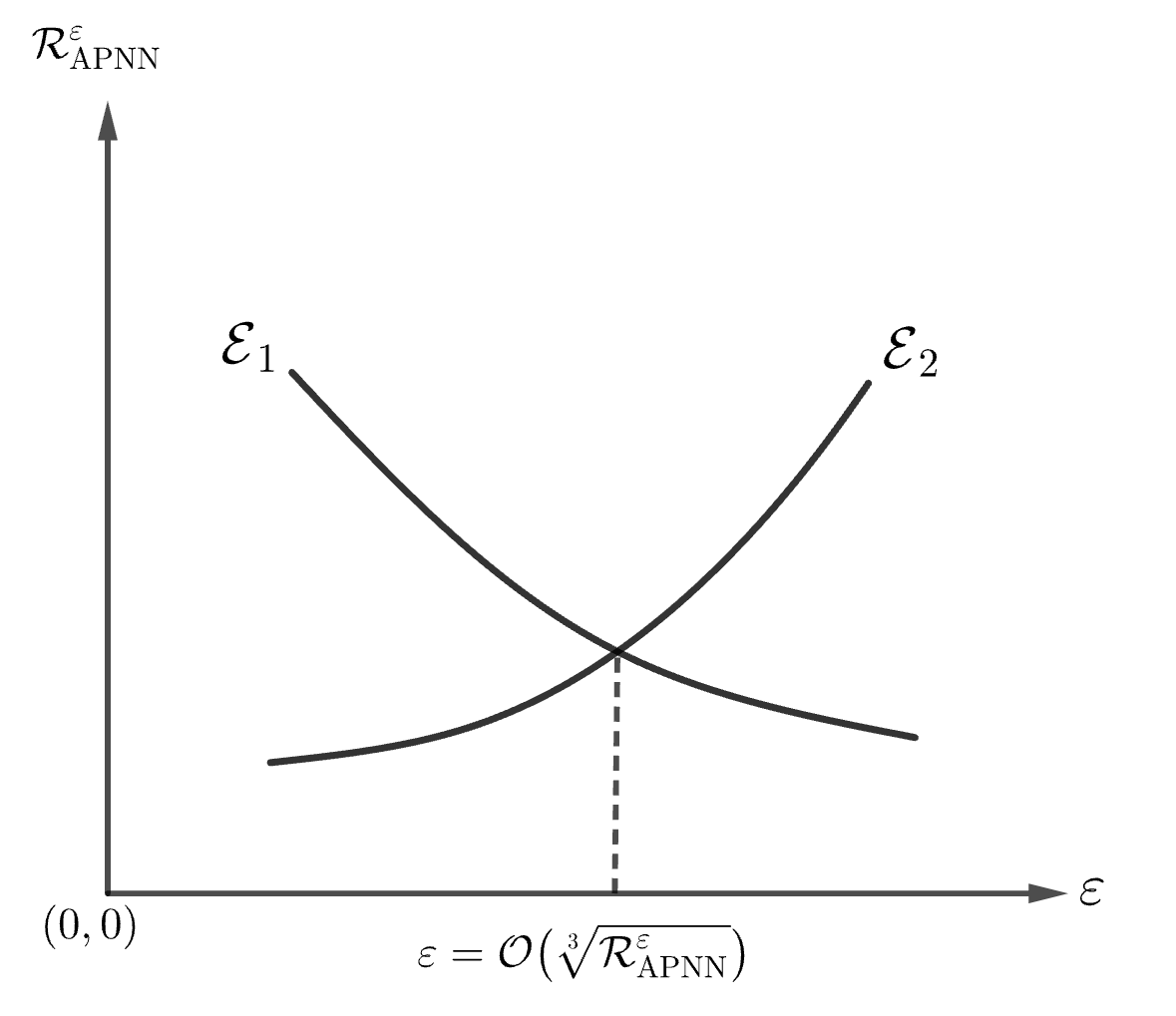}
    \caption{Illustration of uniform convergence of the AP framework.
    }
    \label{fig:aprfm}
\end{figure}

The rest of this paper is organized as follows. Section \ref{Sec: One-dimensional method} details the one-dimensional multiscale parity decomposition framework and the corresponding uniform error estimates. Section \ref{Sec: Two-dimensional method} extends this formulation to two-dimensional problems. Section \ref{Sec: Structure-preserving mechanism} discusses the structure-preserving mechanisms of our AP framework. 
Numerical results are provided in Section \ref{Sec: Numerical experiments}, and conclusions are given in Section \ref{Sec: Conclusions}.

\section{One-dimensional method}\label{Sec: One-dimensional method}
In this section, we consider the one-dimensional RTE in the context of diffusive scaling,  given by 
\begin{equation}\label{linear transport equation}
    \varepsilon\partial_t f
    +
    v\partial_x f
    =
    \frac{1}{\varepsilon}
    \big(\frac{1}{2}\int_{-1}^1 f\, \mathrm{d}v'-f\big), 
\end{equation}
where 
$x\in\mathcal{D}:=(x_L,x_R)$ with velcoity $v\in \Omega:=[-1,1]$. 
We consider the periodic boundary condition
\begin{equation}
    f(t,x_L,v)
    =
    f(t,x_R,v).
\end{equation}
Furthermore,  assume that the initial condition is given as a function of $x$ and $v$:
\begin{equation}
    f(0,x,v):=f_{\mathrm{IC}}(x,v).
\end{equation}

\subsection{Multiscale parity decomposition method}
We next demonstrate the multiscale parity decomposition technique and derive the APNN system for the RTE. Denote $\Omega^+=[0,1]$, and define the even and odd parities for $v\in\Omega^+$ as 
\begin{equation}\label{Def: even- and odd-parities}
    \left\{
    \begin{aligned}
        r(t,x,v)
        =&
        [f(t,x,v)+f(t,x,-v)] / 2, \\
        j(t,x,v)
        =&
        [f(t,x,v)-f(t,x,-v)]/ 2\varepsilon.
    \end{aligned}
    \right.
\end{equation}
Inserting \eqref{Def: even- and odd-parities} into \eqref{linear transport equation}, one gets the system of equations as follows:
\begin{equation}\label{Def: r,j equation}
    \left\{
    \begin{aligned}
        &\varepsilon^2\partial_t r
        +\varepsilon^2v\partial_x j
        =\rho-r,\\
        &\varepsilon^2 \partial_t j+v\partial_x r
        =-j,
    \end{aligned}
    \right.
\end{equation}
where $\rho=\langle r\rangle:=\int_0^1 r(t,x,v)\mathrm{d}v$. To establish a consistent error analysis with respect to $\varepsilon$,  
we introduce $\rho$ as a mediator between $r$ and $j$ within this system.
Integrating the first equation of  \eqref{Def: r,j equation} over $v$ yields the following equation for $\rho$:
\begin{equation}\label{Def: rho equation1}
    \partial_t\rho + \langle v\partial_x j \rangle=0.
\end{equation}
Furthermore, we define $r=\rho +\varepsilon^2 w$, then subtract the first equation of \eqref{Def: r,j equation} by \eqref{Def: rho equation1} to deduce 
\begin{equation}\label{Def: w equation}
    \varepsilon^2\partial_t w
            +w+v\partial_x j-\langle v\partial_x j\rangle=0.
\end{equation}

Therefore, we employ a multiscale parity decomposition of the distribution function $f(t,x,v)$, explicitly separating the macroscopic density $\rho(t, x)$ at $\mathcal{O}(1)$, the odd term $j(t, x, v)$ at $\mathcal{O}(\varepsilon)$, and higher-order corrections $w(t, x, v)$ at $\mathcal{O}(\varepsilon^2)$, each with definite parity properties, i.e.,
\begin{equation}\label{Def: scale decomposition}
f(t, x, v) = \underbrace{\rho(t, x)}_{\mathcal{O}(1)} + \underbrace{\varepsilon j(t, x, v)}_{\mathcal{O}(\varepsilon)} + \underbrace{\varepsilon^2 w(t, x, v)}_{\mathcal{O}(\varepsilon^2)}.
\end{equation}

To summarize, equations \eqref{Def: r,j equation}, \eqref{Def: rho equation1} and \eqref{Def: w equation} constitute the multiscale parity decomposition framework of  \eqref{linear transport equation} to be used for the APNN:
\begin{equation}\label{Def: APNN system}
    \left\{
\begin{aligned}
    &\partial_t\rho + \langle v\partial_x j \rangle=0,\\
    &\varepsilon^2\partial_t j+v\partial_x \rho + \varepsilon^2 v\partial_x w + j = 0,\\
      & \varepsilon^2\partial_t w + w + v\partial_x j - \langle v \partial_x j \rangle = 0, 
\end{aligned}
    \right.
\end{equation}
with the constraint $\langle w \rangle = 0$.
When $\varepsilon\rightarrow 0$, the above equation formally approaches
\begin{equation*}
    \left\{
\begin{aligned}
    &\partial_t\rho + \langle v\partial_x j \rangle=0,\\
    &v\partial_x \rho+j=0,\\
    &w+v\partial_x j-\langle v\partial_x j\rangle=0.
\end{aligned}
    \right.
\end{equation*}
Substituting the second equation into the first and third equations to deduce
\begin{equation}\label{Def: limit equation}
\left\{
\begin{aligned}
   & w=(v^2-\frac{1}{3})\partial_{xx}\rho,\\
    &\partial_t\rho-\frac{1}{3}\partial_{xx}\rho=0.
\end{aligned}
    \right.
\end{equation}

\subsection{Loss function}
Let $\rho_\theta, j_\theta, w_\theta$ be the neural network approximations to $\rho, j, w$ of the \eqref{Def: APNN system}, 
with $\theta$ denoting the trainable parameters of the network.
For the APNN method, we propose the physics-informed loss based on the residual for the multiscale parity decomposition system \eqref{Def: APNN system} as the loss function:
\begin{equation*}
    \mathcal{R}^\varepsilon_{\mathrm{APNN}}
    =
    \mathcal{R}_{\mathrm{residual}}^\varepsilon
    +
    \mathcal{R}_{\mathrm{initial}}^\varepsilon
    +
    \mathcal{R}_{\mathrm{boundary}}^\varepsilon,
\end{equation*}
where $\mathcal{R}_{\mathrm{residual}}^\varepsilon$,  $\mathcal{R}_{\mathrm{initial}}^\varepsilon$ and $\mathcal{R}_{\mathrm{boundary}}^\varepsilon$ are defined as follows
\begin{equation*}
\begin{aligned}
    \mathcal{R}_{\mathrm{residual}}^\varepsilon
    =&
    \frac{1}{|\mathcal{T}\times\mathcal{D}|}
    \int_\mathcal{T}\int_\mathcal{D}
|\partial_t\rho_\theta + \langle v\partial_x j_\theta \rangle|^2\,
    \mathrm{d}x\mathrm{d}t\\
    &+
\frac{1}{|\mathcal{T}\times\mathcal{D}\times\Omega^+|}
    \int_\mathcal{T}\int_\mathcal{D}\int_{\Omega^+}
|\varepsilon^2\partial_t j_\theta
+
v\partial_x \rho_\theta
+
\varepsilon^2 v\partial_x w_\theta
        +
    j_\theta|^2\,
     \mathrm{d}v\mathrm{d}x\mathrm{d}t\\
   & +
    \frac{1}{|\mathcal{T}\times\mathcal{D}\times\Omega^+|}
    \int_\mathcal{T}\int_\mathcal{D}\int_{\Omega^+}
|\varepsilon^2\partial_t w_\theta
            +w_\theta+v\partial_x j_\theta-\langle v\partial_x j_\theta\rangle|^2\,
    \mathrm{d}v\mathrm{d}x\mathrm{d}t,
    \end{aligned}
\end{equation*}
\begin{equation*}
    \begin{aligned}
       \mathcal{R}_{\mathrm{initial}}^\varepsilon
       =&
       \frac{\lambda_1}{|\mathcal{D}|}\int_{\mathcal{D}}
       |\rho_\theta-\rho_{\rm IC}|^2\,
       \mathrm{d}x\\
       &
       +
       \frac{\lambda_2}{|\mathcal{D}\times\Omega^+|}
       \int_\mathcal{D}\int_{\Omega^+}
       \varepsilon^2|j_\theta-j_{\rm IC}|^2
       +
\varepsilon^4|w_\theta-w_{\rm IC}|^2\,
       \mathrm{d}v\mathrm{d}x,
    \end{aligned}
\end{equation*}
and 
\begin{equation*}
    \begin{aligned}
       \mathcal{R}_{\mathrm{boundary}}^\varepsilon
       =&
       \frac{\lambda_3}{|\mathcal{T}\times\partial\mathcal{D}|}
       \int_\mathcal{T}\int_{\partial\mathcal{D}}
       |\rho_\theta-\rho_{\rm BC}|^2\,
       \mathrm{d}s\mathrm{d}t\\
       &+
       \frac{\lambda_4}{|\mathcal{T}\times\partial\mathcal{D}\times\Omega^+|}
       \int_\mathcal{T}\int_{\partial\mathcal{D}}\int_{\Omega^+}
|w_\theta-w_{\rm BC}|^2+|j_\theta-j_{\rm BC}|^2\,
       \mathrm{d}v\mathrm{d}s\mathrm{d}t.
    \end{aligned}
\end{equation*}
Here, $\lambda_i\,(i=1,\cdots,4)$ are hyperparameters as penalty terms, and $\mathcal{T},\mathcal{D},\Omega^+$ represent the
bounded domains of time, space, and velocity space, respectively.

The AP property of this loss can be carried out by considering its behavior for
$\varepsilon$ small. One may only need to focus on the term $\mathcal{R}_{\mathrm{residual}}^\varepsilon$. 
 As $\varepsilon\rightarrow 0$, it yields that $\mathcal{R}_{\mathrm{residual}}^\varepsilon$ converges to 
\begin{equation*}
    \begin{aligned}
       \mathcal{R}_{\mathrm{residual}}^0 
       :=&
       \frac{1}{|\mathcal{T}\times\mathcal{D}|}
    \int_\mathcal{T}\int_\mathcal{D}
|\partial_t\rho_\theta + \langle v\partial_x j_\theta \rangle|^2\,
    \mathrm{d}x\mathrm{d}t\\
    &+
    \frac{1}{|\mathcal{T}\times\mathcal{D}\times\Omega^+|}
    \int_\mathcal{T}\int_\mathcal{D}\int_{\Omega^+}
|v\partial_x \rho_\theta
        +
    j_\theta|^2\,
     \mathrm{d}v\mathrm{d}x\mathrm{d}t\\
     &+
    \frac{1}{|\mathcal{T}\times\mathcal{D}\times\Omega^+|}
    \int_\mathcal{T}\int_\mathcal{D}\int_{\Omega^+}
|w_\theta+v\partial_x j_\theta-\langle v\partial_x j_\theta\rangle|^2\,
    \mathrm{d}v\mathrm{d}x\mathrm{d}t,\\
    \end{aligned}
\end{equation*}
which is exactly the loss for the limiting equations \eqref{Def: limit equation}. Thus, our designed loss function for APNN satisfies the AP property.

\subsection{Convergence of the loss function}

As mentioned above, we build the loss function based on the residual of the multiscale parity decomposition system \eqref{Def: APNN system}. Therefore, our approximated solutions of the neural network are $\rho,j$ and $w$, instead of $f$ if we apply PINN to the model \eqref{linear transport equation}.

Let us first review an important result on the existence of the approximated neural network solution, namely the Universal Approximation Theorem.
\begin{lemma}[\cite{hu2023higher} Lemma A.1]\label{Lemma: Universal Approximation Theorem}
Let $\Omega=\prod_{i=1}^d[a_i,b_i]$. Suppose $f\in H^m(\Omega)$. Let $N>5$ be an integer. Then there exists a $\tanh$ neural network $\hat{f}^N$ with two hidden layers, such that for any $k\in\{0,1,\cdots, m-1\}$,
\begin{equation*}
   \|f-\hat{f}^N\|_{H^k(\Omega)}
    \leq
    C_{k,d,f,\Omega}(1+\ln^k N)N^{-m+k}.
\end{equation*}
Here, the width of the $\tanh$ neural network depends on $N$.
\end{lemma}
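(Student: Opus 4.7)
The plan is to prove this in two stages: first approximate the Sobolev function $f \in H^m(\Omega)$ by a tensor-product polynomial $p$ of appropriate degree with controlled $H^k$ error, then approximate $p$ by a two-hidden-layer $\tanh$ network with error that is small in all Sobolev seminorms of order up to $k$. The width of the final network will be tuned so that the two error contributions balance, yielding the claimed rate $C_{k,d,f,\Omega}(1+\ln^k N)N^{-m+k}$.

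For the polynomial-approximation step, I would invoke classical results for Sobolev functions on a hypercube such as the Bramble--Hilbert lemma or, more concretely, tensor-product Legendre expansions. For $f \in H^m(\Omega)$ with $\Omega = \prod_{i=1}^d [a_i,b_i]$, there exists a polynomial $p_M$ of coordinate degree at most $M$ such that $\|f - p_M\|_{H^k(\Omega)} \lesssim M^{-(m-k)} \|f\|_{H^m(\Omega)}$ for every $0 \le k \le m-1$. This reduces the problem to approximating a completely explicit tensor-product polynomial in the $H^k$ norm by a $\tanh$ network.

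For the network-approximation step, the key algebraic ingredients are the Taylor expansion $\tanh(hx) = hx - \tfrac{1}{3}(hx)^3 + O(h^5)$, which, suitably rescaled, lets a single $\tanh$ neuron approximate the identity to any order in a small parameter $h$; and the polarization identity $xy = \tfrac{1}{4}((x+y)^2 - (x-y)^2)$, which lets two hidden layers of $\tanh$ implement multiplication once one knows how to approximate $x^2$ via a shallow $\tanh$ sub-network. Iterating the multiplication primitive, any monomial of total degree $|\alpha| \le M$, and hence any tensor-product polynomial of coordinate degree $M$ in $d$ variables, can be represented to controllable accuracy by a two-hidden-layer $\tanh$ network whose width grows polynomially in $M$. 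Choosing $N \sim M^d$ gives the stated dependence of the network width on $N$.

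The main obstacle, and the reason for the logarithmic factor $(1+\ln^k N)$ in the bound, is that the proof must control all Sobolev derivatives of order up to $k$ simultaneously, not merely a uniform pointwise error. Differentiating a composition of $\tanh$ layers $k$ times introduces, via the chain and Faà di Bruno formulas, factors of the scaling parameter $h^{-k}$ from the internal rescaling of activations, while the Taylor truncation errors in each $\tanh$ contribute factors of $h^{\ell}$ for some $\ell$. Balancing these competing contributions forces $h$ to be chosen as a negative power of $\log N$ rather than of $N$, which is precisely what produces the $\ln^k N$ loss. Combining the polynomial-approximation bound $M^{-(m-k)}$ with this $\tanh$-to-polynomial bound, and relating $M$ and $N$ through $N \sim M^d$, then yields the claimed estimate uniformly in $k \in \{0,\dots,m-1\}$.
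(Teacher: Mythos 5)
This lemma is not proved in the paper at all: it is quoted verbatim as Lemma~A.1 of the cited reference \cite{hu2023higher} (which in turn rests on the tanh-approximation results of De Ryck, Lanthaler and Mishra), so there is no in-paper argument to compare yours against; any assessment must be against the literature proof. Your outline is in the right spirit of that proof --- reduce to polynomials, emulate monomials/multiplication with shallow $\tanh$ sub-networks via Taylor expansion of $\tanh$ and polarization, and accept a logarithmic loss from controlling derivatives of the composition --- but as written it has a concrete quantitative gap. You approximate $f$ by a \emph{global} tensor-product polynomial of coordinate degree $M$ with $H^k$ error $M^{-(m-k)}$ and then set $N\sim M^d$; with that identification the final error is $N^{-(m-k)/d}$, not the claimed $N^{-(m-k)}$. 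To match the statement you need $M\sim N$ with the \emph{width} (not the rate parameter) growing like a power of $N$; in the actual proof $N$ plays the role of the number of subdivisions per coordinate direction, the approximation uses local averaged Taylor polynomials of fixed degree $m-1$ on roughly $N^d$ subcubes glued together by an approximate partition of unity built from $\tanh$ units, and the $(1+\ln^k N)$ factor arises from controlling the $H^k$ norms of that gluing construction rather than from a $\log$-scaled identity approximation alone.

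Two further points would need care if you pursued your global-polynomial route. First, the coefficients of a high-degree spectral approximant can grow with $M$, and the network-emulation error for a polynomial scales with its coefficient size, so you must track this dependence or renormalize (the local fixed-degree Taylor construction avoids the issue). Second, ``iterating the multiplication primitive'' naively increases depth; keeping exactly two hidden layers requires the specific construction in the cited works (odd Taylor expansion of $\tanh$ plus finite-difference extraction of monomials), and the error must be measured in $H^k$, i.e.\ you must bound derivatives of the network approximant up to order $k$, which is where the competing powers of the internal scaling parameter genuinely enter. With $M\sim N$, coefficient control, and the partition-of-unity (or a careful global) construction, your sketch can be completed, but in its present form the exponent bookkeeping does not yield the stated rate.
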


Based on this approximation property, we can establish the convergence of the APNN loss function.
\begin{theorem}\label{Thm: convergence of the loss fuction}
    Let $\rho\in H^2(\mathcal{T}\times\mathcal{D}),\ j,w\in H^2(\mathcal{T}\times\mathcal{D}\times\Omega^+)$ be the analytic solution to the multiscale parity decomposition system \eqref{Def: APNN system}, and $N>5$ be an integer. Then there exist APNN solutions $\rho_\theta=\hat{\rho}^N,j_\theta=\hat{j}^N,w_\theta=\hat{w}^N$ such that
   \begin{equation*}
        \mathcal{R}^\varepsilon_{\mathrm{APNN}}\rightarrow 0,\quad \mbox{as } N\rightarrow \infty.
    \end{equation*}
\end{theorem}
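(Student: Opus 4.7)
The plan is to construct the neural network approximations $\rho_\theta, j_\theta, w_\theta$ by a direct application of the Universal Approximation Theorem (Lemma \ref{Lemma: Universal Approximation Theorem}) to the exact solutions $\rho, j, w$, and then show that each of the three pieces $\mathcal{R}^\varepsilon_{\mathrm{residual}}$, $\mathcal{R}^\varepsilon_{\mathrm{initial}}$, $\mathcal{R}^\varepsilon_{\mathrm{boundary}}$ is controlled by Sobolev norms of the approximation errors, which vanish as $N \to \infty$.

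First, I would invoke Lemma \ref{Lemma: Universal Approximation Theorem} with $m=2$ on each of the three exact components. Since $\rho \in H^2(\mathcal{T}\times\mathcal{D})$ and $j,w \in H^2(\mathcal{T}\times\mathcal{D}\times\Omega^+)$, for every integer $N>5$ there exist $\tanh$ networks $\hat\rho^N, \hat\jmath^N, \hat w^N$ with
\begin{equation*}
\|\rho-\hat\rho^N\|_{H^1}+\|j-\hat\jmath^N\|_{H^1}+\|w-\hat w^N\|_{H^1}\leq C(1+\ln N)N^{-1},
\end{equation*}
so these Sobolev errors tend to $0$ as $N\to\infty$. Setting $\rho_\theta=\hat\rho^N$, $j_\theta=\hat\jmath^N$, $w_\theta=\hat w^N$, the remaining task is to bound each residual by these Sobolev errors.

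For the PDE residual, I would exploit the linearity of the system \eqref{Def: APNN system}: since the exact triple $(\rho,j,w)$ produces zero residual, the residual evaluated at $(\rho_\theta,j_\theta,w_\theta)$ equals the same differential operator applied to the error triple $(\rho_\theta-\rho,\, j_\theta-j,\, w_\theta-w)$. Each such residual involves only first-order time/space derivatives and the velocity average $\langle\cdot\rangle$ (which is bounded on $L^2(\Omega^+)$ by Cauchy–Schwarz), so an application of the triangle inequality and standard $L^2$ estimates yields
\begin{equation*}
\mathcal{R}^\varepsilon_{\mathrm{residual}}\leq C(1+\varepsilon^{2})^{2}\bigl(\|\rho-\rho_\theta\|_{H^1}^2+\|j-j_\theta\|_{H^1}^2+\|w-w_\theta\|_{H^1}^2\bigr).
\end{equation*}
Because we work with $\varepsilon\in(0,1]$, the prefactor is uniformly bounded, so this contribution vanishes as $N\to\infty$.

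For $\mathcal{R}^\varepsilon_{\mathrm{initial}}$ and $\mathcal{R}^\varepsilon_{\mathrm{boundary}}$, I would apply the trace theorem: the $L^2$ norm on $\{t=0\}\times\mathcal{D}$ (resp.\ on $\mathcal{T}\times\partial\mathcal{D}$ or the analogous lifted boundaries including $\Omega^+$) of each error is controlled by its $H^1$ norm on the full domain, up to a constant depending only on the geometry of $\mathcal{T}\times\mathcal{D}\times\Omega^+$. Since $\rho, j, w$ satisfy the prescribed initial and boundary data, the differences $\rho_\theta-\rho_{\mathrm{IC}}=\rho_\theta-\rho|_{t=0}$, etc., inherit these $H^1$-to-trace estimates, giving
\begin{equation*}
\mathcal{R}^\varepsilon_{\mathrm{initial}}+\mathcal{R}^\varepsilon_{\mathrm{boundary}}\leq C\sum_{\phi\in\{\rho,j,w\}}\|\phi-\phi_\theta\|_{H^1}^2,
\end{equation*}
with a constant depending on $\lambda_1,\ldots,\lambda_4$ and on $\varepsilon$ only through bounded factors. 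Combining all three bounds with the Universal Approximation estimate yields $\mathcal{R}^\varepsilon_{\mathrm{APNN}}=\mathcal{O}((1+\ln N)^2 N^{-2})\to 0$.

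The most delicate point is ensuring that the constants multiplying $\|\cdot\|_{H^1}$ remain finite (not $\varepsilon$-uniform, which is not claimed here, but at least finite for each fixed $\varepsilon$); this is straightforward because every occurrence of $1/\varepsilon$ was absorbed into the derivation of \eqref{Def: APNN system}. I expect no genuine obstacle beyond carefully tracking the trace-theorem constants on the anisotropic domain $\mathcal{T}\times\mathcal{D}\times\Omega^+$ and ensuring that the velocity average $\langle\cdot\rangle$, which appears in both the $\rho$-equation and the $w$-equation, is handled by Cauchy–Schwarz when passing from pointwise-in-$v$ estimates to $L^2$ estimates.
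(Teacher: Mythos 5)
Your proposal is correct and follows essentially the same route as the paper's proof: invoke the Universal Approximation Theorem with $m=2$ to obtain $H^1$-accurate networks, use the linearity of system \eqref{Def: APNN system} to rewrite each residual as the operator applied to the approximation errors, bound these by triangle/Cauchy–Schwarz inequalities, and control the initial and boundary terms via the trace inequality, arriving at the same rate $\mathcal{O}((1+\ln N)^2N^{-2})$.
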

\begin{proof}
    Since $\rho\in H^2(\mathcal{T}\times\mathcal{D}),\ j,w\in H^2(\mathcal{T}\times\mathcal{D}\times\Omega^+)$, then there exist neural network solutions $\rho_\theta=\hat{\rho}^N,j_\theta=\hat{j}^N,w_\theta=\hat{w}^N$, such that
   \begin{equation}
        \|\rho-\rho_\theta\|_{H^1(\mathcal{T}\times\mathcal{D})}
        \leq
        C(1+\ln N)N^{-1},
    \end{equation}
    and 
    \begin{equation}\label{approximate of j w}
        \|u-u_\theta\|_{H^1(\mathcal{T}\times\mathcal{D}\times\Omega^+)}
        \leq
      C(1+\ln N)N^{-1},
   \end{equation}
    where $u=j,w$. 
       For brevity of notation, we define
    \begin{equation*}
        d_\theta^{(1)}(t,x)
        :=
        \partial_t\rho_\theta
        +
        \langle v\partial_x j_\theta\rangle.
    \end{equation*}
    Since $\partial_t\rho+\langle v\partial_x j\rangle=0$, subtract $d_\theta^{(1)}(t,x)$ by the above equation, then
    \begin{equation}\label{d1}
        d_\theta^{(1)}(t,x)
        =
        \partial_t(\rho_\theta-\rho)
        +
        \langle v\partial_x(j_\theta-j)\rangle.
    \end{equation}
By integrating $|d_\theta^{(1)}|^2$ over $\mathcal{T}\times\mathcal{D}$, it yields that
\begin{equation}\label{estimate of loss 1}
\begin{aligned}
    &\|\partial_t(\rho_\theta-\rho)
        +
        \langle v\partial_x(j_\theta-j)\rangle\|_{L^2(\mathcal{T}\times\mathcal{D})}^2\\
        &\leq
        \|\partial_t(\rho-\rho_\theta)\|_{L^2(\mathcal{T}\times\mathcal{D})}^2
        +
        \|\partial_t(j-j_\theta)\|_{L^2(\mathcal{T}\times\mathcal{D}\times\Omega^+)}^2
        \leq
       C(1+\ln N)^2N^{-2},
       \end{aligned}
\end{equation}
 which corresponds to the estimate of the first term in the loss function $\mathcal{R}^\varepsilon_{\mathrm{residual}}$. For the other terms, denote
\begin{equation}\label{d2}
    d_\theta^{(2)}(t,x)
        :=
        \varepsilon^2\partial_t j_\theta
        +
       j_\theta
        +
        v\partial_x \rho_\theta
        +
        \varepsilon^2 v\partial_x w_\theta,
\end{equation}
and
\begin{equation}\label{d3}
    d_\theta^{(3)}(t,x)
    :=
    \varepsilon^2\partial_t w_\theta
            +w_\theta+v\partial_x j_\theta-\langle v\partial_x j_\theta\rangle.
\end{equation}
Since $\rho,j$, and $w$ are the solutions of the linear system \eqref{Def: APNN system}, we subtract $d_\theta^{(2)}$ and $d_\theta^{(3)}$ from the second and last equations of \eqref{Def: APNN system}, respectively, to obtain
\begin{equation*}
   \left\{
\begin{aligned}
     d_\theta^{(2)}(t,x)
        =&
        \varepsilon^2\partial_t(j_\theta-j)
        +
        (j_\theta-j)
        +
        v\partial_x(\rho_\theta-\rho)
        +
         \varepsilon^2 v\partial_x(w_\theta-w),\\
        d_\theta^{(3)}(t,x)
    =&
    \varepsilon^2\partial_t (w_\theta-w)
            +
            (w_\theta-w)
            +
            v\partial_x (j_\theta-j)
            -
            \langle v\partial_x (j_\theta-j)\rangle.
\end{aligned}
   \right.
\end{equation*}
Integrating $\big|d_\theta^{(2)}\big|^2$ and $\big|d_\theta^{(3)}\big|^2$ over $\mathcal{T}\times\mathcal{D}\times\Omega^+$, one gets
\begin{equation*}
    \begin{aligned} 
   \|\varepsilon^2\partial_t(j_\theta-j)
        +&
        (j_\theta-j)
        +
         v\partial_x(\rho_\theta-\rho)
        +
         \varepsilon^2 v\partial_x(w_\theta-w)\|_{L^2(\mathcal{T}\times\mathcal{D}\times\Omega^+)}^2\\
        \leq&
        \varepsilon^2\|\partial_t(j_\theta-j)\|_{L^2{(\mathcal{T}\times\mathcal{D}\times\Omega^+)}}^2
        +
        \|j_\theta-j\|_{L^2{(\mathcal{T}\times\mathcal{D}\times\Omega^+)}}^2\\
       & +
        \|\partial_x(\rho_\theta-\rho)\|_{L^2{(\mathcal{T}\times\mathcal{D}}}^2
        +
         \varepsilon^2\|\partial_x(w_\theta-w)\|_{L^2{(\mathcal{T}\times\mathcal{D}\times\Omega^+)}}^2,
    \end{aligned}
\end{equation*}
and 
\begin{equation*}
    \begin{aligned}
   \|\varepsilon^2&\partial_t (w_\theta-w)
            +
            (w_\theta-w)
            +
            v\partial_x (j_\theta-j)
            -
            \langle v\partial_x (j_\theta-j)\rangle\|_{L^2(\mathcal{T}\times\mathcal{D}\times\Omega^+)}^2\\
            \leq&
        \varepsilon^2\|\partial_t(w_\theta-w)\|_{L^2{(\mathcal{T}\times\mathcal{D}\times\Omega^+)}}^2
        +
        \|w_\theta-w\|_{L^2{(\mathcal{T}\times\mathcal{D}\times\Omega^+)}}^2
        +
        2\|\partial_x(j_\theta-j)\|_{L^2{(\mathcal{T}\times\mathcal{D}\times\Omega^+)}}^2.
    \end{aligned}
\end{equation*}
Thus from \eqref{approximate of j w}, we obtain
\begin{equation}\label{estimate of loss 2}
    \|d_\theta^{(2)}\|_{L^2(\mathcal{T}\times\mathcal{D}\times\Omega^+)}^2
    +
    \|d_\theta^{(3)}\|_{L^2(\mathcal{T}\times\mathcal{D}\times\Omega^+)}^2
    \leq
       C(1+\ln N)^2N^{-2}.
\end{equation}
Combining the estimates \eqref{estimate of loss 1} and \eqref{estimate of loss 2} to get
\begin{equation}\label{estimate of residual}
   \mathcal{R}_{\mathrm{residual}}^\varepsilon
    \leq
       C(1+\ln N)^2N^{-2}.
\end{equation}
For the first term of the initial residual, using the trace inequality to get
\begin{equation*}
\begin{aligned}
    \|\rho_\theta-\rho_{\rm IC}\|_{L^2(\mathcal{D})}
    \leq&
    \|\rho_\theta-\rho\|_{L^2(\partial (\mathcal{T}\times\mathcal{D}))}\\
    \leq&
    \|\rho_\theta-\rho\|_{H^1( \mathcal{T}\times\mathcal{D})}
    \leq
    C(1+\ln N)N^{-1}.
    \end{aligned}
\end{equation*}
Similarly, we can deduce that
\begin{equation*}
\begin{aligned}
    \varepsilon\|j_\theta-j_{\rm IC}\|_{L^2(\mathcal{D}\times\Omega^+)}
    +
    \varepsilon^2\|w_\theta-w_{\rm IC}\|_{L^2(\mathcal{D}\times\Omega^+)}
    \leq
    C(1+\ln N)N^{-1}.
    \end{aligned}
\end{equation*}
Then it yields that
\begin{equation}\label{estimate of initial}
    \mathcal{R}_{\mathrm{initial}}^\varepsilon
    \leq
    C(1+\ln N)^2N^{-2}.
\end{equation}
Moreover, we can also estimate the boundary residual using trace inequalities to get
\begin{equation}\label{estimate of boundary}
    \mathcal{R}_{\mathrm{boundary}}^\varepsilon
    \leq
    C(1+\ln N)^2N^{-2}.
\end{equation}
   Combining the estimates \eqref{estimate of residual}, \eqref{estimate of initial}, and \eqref{estimate of boundary},  we conclude that
    \begin{equation*}
        \mathcal{R}_{\mathrm{APNN}}^\varepsilon
        =
        \mathcal{R}_{\mathrm{residual}}^\varepsilon
        +
        \mathcal{R}_{\mathrm{initial}}^\varepsilon
        +
        \mathcal{R}_{\mathrm{boundary}}^\varepsilon
        \leq
        C(1+\ln N)^2N^{-2}.
    \end{equation*}
    Therefore, this result is proven.

\end{proof}

\subsection{Uniform error analysis}
Next, we prove that the APNN converges to the analytic solution of the RTE \eqref{linear transport equation} uniformly with respect to $\varepsilon$. To quantify the convergence behavior, we define the error between analytic solutions for system \eqref{Def: APNN system} and APNN solutions as $\mathcal{R}_{\mathrm{total}}^\varepsilon$ in the following form:
    \begin{equation}\label{Def: total error}
    \begin{aligned}
        \mathcal{R}_{\mathrm{total}}^\varepsilon
        =&
        \|f-f_\theta\|_{L^2(\mathcal{T}\times\mathcal{D}\times\Omega^+)}^2.
        \end{aligned}
    \end{equation}
The following theorem establishes a bound for this total error using direct techniques.

\begin{theorem}\label{Thm: convergence of the APNN solution-classical}
    Let $\rho\in H^1(\mathcal{T}\times\mathcal{D}),\ j,\ w\in H^1(\mathcal{T}\times\mathcal{D}\times\Omega^+)$ be the analytic solution to the multiscale parity decomposition  system \eqref{Def: APNN system}. Then
    \begin{equation*}
        \mathcal{R}_{\mathrm{total}}^\varepsilon
        \leq
        \frac{C}{\varepsilon}\mathcal{R}^\varepsilon_{\mathrm{APNN}}.
    \end{equation*}
\end{theorem}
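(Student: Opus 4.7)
The plan is to perform a weighted energy estimate on the linear system satisfied by the errors $e_\rho := \rho-\rho_\theta$, $e_j := j-j_\theta$, $e_w := w-w_\theta$, and then reconstruct $f-f_\theta$ via the scale decomposition \eqref{Def: scale decomposition}. First, using \eqref{d1}--\eqref{d3} and the fact that $(\rho,j,w)$ solves \eqref{Def: APNN system} exactly, the triple $(e_\rho, e_j, e_w)$ satisfies
\begin{equation*}
\left\{
\begin{aligned}
\partial_t e_\rho + \langle v\partial_x e_j\rangle &= -d^{(1)}_\theta,\\
\varepsilon^2 \partial_t e_j + v\partial_x e_\rho + \varepsilon^2 v\partial_x e_w + e_j &= -d^{(2)}_\theta,\\
\varepsilon^2 \partial_t e_w + e_w + v\partial_x e_j - \langle v\partial_x e_j\rangle &= -d^{(3)}_\theta,
\end{aligned}
\right.
\end{equation*}
with initial and boundary errors controlled in the $\varepsilon$-weighted norms that define $\mathcal{R}^\varepsilon_{\mathrm{initial}}$ and $\mathcal{R}^\varepsilon_{\mathrm{boundary}}$.

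Next, I would test these three equations against $e_\rho$, $e_j$, and $\varepsilon^2 e_w$ respectively, integrate over $\mathcal{D}$ or $\mathcal{D}\times\Omega^+$ accordingly, and sum. Periodic boundary conditions produce two pairwise cancellations of the advective cross-terms via integration by parts in $x$ (for the $(e_\rho,e_j)$ coupling and, after extracting $\varepsilon^2$, for the $(e_j,e_w)$ coupling), and the architectural constraint $\langle w_\theta\rangle=0$ enforced in Section~\ref{Sec: Structure-preserving mechanism} kills the remaining coupling $\varepsilon^2\!\int\! e_w\langle v\partial_x e_j\rangle$. The resulting energy identity is
\begin{equation*}
\frac{1}{2}\frac{d}{dt}\bigl(\|e_\rho\|^2 + \varepsilon^2\|e_j\|^2 + \varepsilon^4\|e_w\|^2\bigr) + \|e_j\|^2 + \varepsilon^2\|e_w\|^2 = -\!\int\! e_\rho\, d^{(1)}_\theta - \!\int\! e_j\, d^{(2)}_\theta - \varepsilon^2\!\int\! e_w\, d^{(3)}_\theta.
\end{equation*}
Young's inequality on the right, together with the coercive dissipation on the left, and Gronwall's lemma in $t\in\mathcal{T}$ --- with initial data absorbed through the precisely matched $\varepsilon$-weights in $\mathcal{R}^\varepsilon_{\mathrm{initial}}$ --- yield the weighted bound
\begin{equation*}
\|e_\rho\|^2_{L^2(\mathcal{T}\times\mathcal{D})} + \varepsilon^2\|e_j\|^2_{L^2(\mathcal{T}\times\mathcal{D}\times\Omega^+)} + \varepsilon^4\|e_w\|^2_{L^2(\mathcal{T}\times\mathcal{D}\times\Omega^+)} \leq C_T\,\mathcal{R}^\varepsilon_{\mathrm{APNN}}.
\end{equation*}

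Finally, using $f-f_\theta = e_\rho + \varepsilon e_j + \varepsilon^2 e_w$ for $v\in\Omega^+$, I would bound $\mathcal{R}^\varepsilon_{\mathrm{total}} = \|e_\rho + \varepsilon e_j + \varepsilon^2 e_w\|^2_{L^2(\mathcal{T}\times\mathcal{D}\times\Omega^+)}$ by a direct termwise estimate. The naive splitting converts the dissipative-norm pieces via $\|e_j\|^2 \leq \varepsilon^{-2}(\varepsilon^2\|e_j\|^2)$ and $\|e_w\|^2 \leq \varepsilon^{-4}(\varepsilon^4\|e_w\|^2)$ and handles the cross terms by plain Cauchy--Schwarz, rather than using a sharper grouping that would keep every piece at a matched $\varepsilon$-weight. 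This direct route loses exactly one factor of $\varepsilon^{-1}$, delivering the stated $\mathcal{R}^\varepsilon_{\mathrm{total}} \leq C\varepsilon^{-1}\mathcal{R}^\varepsilon_{\mathrm{APNN}}$ --- i.e., the classical bound $\mathcal{E}_1$ whose sub-optimality is precisely what motivates the refined Theorem~\ref{Thm: convergence of the APNN solution}. The main obstacle in executing this plan is the pairwise cancellation of the first-order transport couplings in the energy identity: it relies on the multiplier pattern $(1,1,\varepsilon^2)$, the periodic boundary condition, and the $\langle w\rangle=0$ constraint, and if any of these is missing the coercive dissipation $\|e_j\|^2 + \varepsilon^2\|e_w\|^2$ would be contaminated by terms of negative $\varepsilon$-order and the Gronwall constant would blow up as $\varepsilon\to 0$.
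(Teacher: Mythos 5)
Your route is genuinely different from the paper's proof of this theorem, and the place you claim the factor $\varepsilon^{-1}$ comes from is not where it actually arises. The paper does \emph{not} run a component-wise weighted energy estimate here; it first reconstructs $\tilde f=\tilde\rho+\varepsilon\tilde j+\varepsilon^2\tilde w$, observes that $\tilde f$ solves the single kinetic equation $\varepsilon^2\partial_t\tilde f+\varepsilon v\partial_x\tilde f=(\tilde\rho-\tilde f)+\varepsilon^2 d_\theta^{(1)}+\varepsilon d_\theta^{(2)}+\varepsilon^2 d_\theta^{(3)}$, tests with $\tilde f$, uses periodicity and the dissipativity $\int(\tilde\rho-\tilde f)\tilde f\le 0$, and then divides by $\varepsilon^2$; the $\varepsilon^{-1}$ is produced by Young's inequality on the source term $\varepsilon d_\theta^{(2)}\tilde f$, which after division leaves $\varepsilon^{-1}\|d_\theta^{(2)}\|^2$ on the right-hand side. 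The component-wise estimate with multipliers $(e_\rho,e_j,\varepsilon^2 e_w)$ that you propose is, almost verbatim, the paper's proof of the \emph{refined} Theorem~\ref{Thm: convergence of the APNN solution}, not of this one.

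The concrete problem in your write-up is the last step. Once you have $\|e_\rho\|^2+\varepsilon^2\|e_j\|^2+\varepsilon^4\|e_w\|^2\le C_T\,\mathcal{R}^\varepsilon_{\mathrm{APNN}}$, the reconstruction loses nothing: $\|e_\rho+\varepsilon e_j+\varepsilon^2 e_w\|^2\le 3\bigl(\|e_\rho\|^2+\varepsilon^2\|e_j\|^2+\varepsilon^4\|e_w\|^2\bigr)$, because the powers of $\varepsilon$ in the decomposition \eqref{Def: scale decomposition} exactly match the weights in your energy. There is no need to ``convert'' $\|e_j\|^2\le\varepsilon^{-2}(\varepsilon^2\|e_j\|^2)$ --- you only ever need $\varepsilon^2\|e_j\|^2$ --- so no factor of $\varepsilon^{-1}$ appears, and your argument actually delivers $\mathcal{R}^\varepsilon_{\mathrm{total}}\le C\,\mathcal{R}^\varepsilon_{\mathrm{APNN}}$. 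Since $C\le C/\varepsilon$ for $\varepsilon\le 1$ the stated inequality still follows, so the theorem is not endangered, but the narrative of a reconstruction loss is wrong, and your bound would (suspiciously) be \emph{sharper} than the refined Theorem~\ref{Thm: convergence of the APNN solution}. That sharpening hinges entirely on your claim that $\varepsilon^2\int\langle v\partial_x e_j\rangle e_w$ vanishes because $\langle e_w\rangle=0$: this requires $\langle w_\theta\rangle=0$ to hold \emph{exactly}, which is an architectural property from Section~\ref{Sec: Structure-preserving mechanism} and not a hypothesis of the theorem; the paper deliberately avoids assuming it and instead bounds this term by $\tfrac{\varepsilon^2}{2}\|\partial_x\tilde j\|+\tfrac{\varepsilon^2}{2}\|\tilde w\|^2$, which is precisely the origin of the $+\varepsilon^2$ in Theorem~\ref{Thm: convergence of the APNN solution}. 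You should either state that extra assumption explicitly or follow the paper's single-equation argument for this theorem.
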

\begin{proof}
Recall the defintion of $d_\theta^{(1)},d_\theta^{(2)},d_\theta^{(3)}$ in \eqref{d1}, \eqref{d2}, and \eqref{d3}. Denote $\tilde{\rho}=\rho_\theta-\rho$, $\tilde{j}=j_\theta-j$ and $\tilde{w}=w_\theta-w$, then we have
\begin{equation*}
            \left\{
        \begin{aligned}
   & \partial_t\tilde{\rho}
    +
    \langle v\partial_x\tilde{j} \rangle
    =
    d_\theta^{(1)}(t,x),\\
    &\varepsilon^2\partial_t\tilde{j}+\tilde{j}
    +
     v\partial_x\tilde{\rho} 
     +
     \varepsilon^2 v\partial_x\tilde{w} 
    =
     d_\theta^{(2)}(t,x),\\
    & \varepsilon^2\partial_t\tilde{w}+\tilde{w}
    +
    v\partial_x\tilde{j}
    -
    \langle v\partial_x\tilde{j} \rangle
    =
     d_\theta^{(3)}(t,x).
      \end{aligned}
            \right.
        \end{equation*}
     Based on the decomposition of $f$ in \eqref{Def: scale decomposition}, we use the notation $\tilde{f}=f-f_\theta$ with 
     \begin{equation}
     \begin{aligned}
     \tilde{f}
     =
     \tilde{\rho}
     +
     \varepsilon\tilde{j}
     +
     \varepsilon^2 \tilde{w},
         \end{aligned}
     \end{equation}
     which is governed by 
\begin{equation}
    \varepsilon^2\partial_t\tilde{f}
    +
    \varepsilon v\partial_x\tilde{f}
    =
    (\tilde{\rho}-\tilde{f})
    +
    \varepsilon^2 d_\theta^{(1)}+ \varepsilon d_\theta^{(2)}+ \varepsilon^2 d_\theta^{(3)}.
\end{equation}
Multiply the above equation by $\tilde{f}$ and integrate in $\mathcal{D}\times \Omega^+$ to get
\begin{equation*}
    \begin{aligned}
       \frac{\varepsilon^2}{2}\frac{\mathrm{d}}{\mathrm{d}t}\|\tilde{f}\|_{L^2(\mathcal{D}\times \Omega^+)}^2 
       +&
       \int_\mathcal{D}\int_{\Omega^+}
\varepsilon v\partial_x\tilde{f}\,\tilde{f}
       \mathrm{d}v\mathrm{d}x\\
       =&
       \int_\mathcal{D}\int_{\Omega^+}
       (\tilde{\rho}-\tilde{f})\tilde{f}
       \mathrm{d}v\mathrm{d}x
       +
       \int_\mathcal{D}\int_{\Omega^+}
       (\varepsilon^2 d_\theta^{(1)}+ \varepsilon d_\theta^{(2)}+ \varepsilon^2 d_\theta^{(3)})\tilde{f}
       \mathrm{d}v\mathrm{d}x.
    \end{aligned}
\end{equation*}
    By integration by parts, we have 
    \begin{equation}
        \int_\mathcal{D}\int_{\Omega^+}
\varepsilon v\partial_x\tilde{f}\,\tilde{f}
       \mathrm{d}v\mathrm{d}x
       =
       0.
    \end{equation}
    Since $\tilde{\rho}=\langle\tilde{f}\rangle$, it follows that
    \begin{equation}
         \int_\mathcal{D}\int_{\Omega^+}
       (\tilde{\rho}-\tilde{f})\tilde{f}
       \mathrm{d}v\mathrm{d}x
       \leq
       0.
    \end{equation}
   In addition, by applying Young's inequality, we obtain
    \begin{equation*}
    \begin{aligned}
    \int_\mathcal{D}&\int_{\Omega^+}
       (\varepsilon^2 d_\theta^{(1)}+ \varepsilon d_\theta^{(2)}+ \varepsilon^2 d_\theta^{(3)})\tilde{f}
       \mathrm{d}v\mathrm{d}x\\
       \leq&
       \varepsilon^2\|\tilde{f}\|_{L^2(\mathcal{D}\times \Omega^+)}^2 
       +
       \varepsilon^2\|d_\theta^{(1)}\|_{L^2(\mathcal{D})}^2 
       +
       \varepsilon\|d_\theta^{(2)}\|_{L^2(\mathcal{D}\times \Omega^+)}^2 
       +
       \varepsilon^2\|d_\theta^{(3)}\|_{L^2(\mathcal{D}\times \Omega^+)}^2 .
       \end{aligned}
        \end{equation*}
      Hence, we can conclude that 
      \begin{equation*}
              \frac{\mathrm{d}}{\mathrm{d}t}\|\tilde{f}\|_{L^2(\mathcal{D}\times \Omega^+)}^2 
              \leq 
              2\|\tilde{f}\|_{L^2(\mathcal{D}\times \Omega^+)}^2 
               +
               2\|d_\theta^{(1)}\|_{L^2(\mathcal{D})}^2 \\
               +
               \frac{2}{\varepsilon}\|d_\theta^{(2)}\|_{L^2(\mathcal{D}\times \Omega^+)}^2 
               +
               2\|d_\theta^{(3)}\|_{L^2(\mathcal{D}\times \Omega^+)}^2 .
      \end{equation*}      
      Therefore, by Gronwall's inequality, we prove this result.
\end{proof}

Unfortunately, the above result indicates that the error may become unbounded as $\varepsilon$ approaches zero. To overcome this limitation and obtain a uniform bound, we proceed with a more refined error analysis.
Recall the decomposition of $f$ in \eqref{Def: scale decomposition}, and apply triangle inequality to get the relation:
\begin{equation*}
    \begin{aligned}
        \mathcal{R}_{\mathrm{total}}^\varepsilon
        \leq
        \|\rho-\rho_\theta\|_{L^2(\mathcal{T}\times\mathcal{D})}^2
        +
        \varepsilon^2\|j-j_\theta\|_{L^2(\mathcal{T}\times\mathcal{D}\times\Omega^+)}^2
        +
        \varepsilon^4\|w-w_\theta\|_{L^2(\mathcal{T}\times\mathcal{D}\times\Omega^+)}^2.
        \end{aligned}
    \end{equation*}
Therefore, instead of estimating the total error directly, we estimate each term on the right-hand side of the above equation.

    \begin{theorem}\label{Thm: convergence of the APNN solution}
    Let $\rho\in H^1(\mathcal{T}\times\mathcal{D}),\ j,\ w\in H^1(\mathcal{T}\times\mathcal{D}\times\Omega^+)$ be the analytic solution to the multiscale parity decomposition system \eqref{Def: APNN system}. Then
    \begin{equation*}
        \mathcal{R}_{\mathrm{total}}^\varepsilon
        \leq
        C(\mathcal{R}^\varepsilon_{\mathrm{APNN}}+\varepsilon^2),
    \end{equation*}
    where the constant $C>0$ is independent of $\varepsilon$.
    
    \end{theorem}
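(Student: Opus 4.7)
The plan is to build on the weighted triangle-inequality splitting $\mathcal{R}^\varepsilon_{\mathrm{total}}\leq\|\tilde\rho\|^2+\varepsilon^2\|\tilde j\|^2+\varepsilon^4\|\tilde w\|^2$ already displayed before the statement (with $\tilde\rho:=\rho_\theta-\rho$, $\tilde j:=j_\theta-j$, $\tilde w:=w_\theta-w$), and to run a weighted energy estimate on the combined quantity $E(t):=\|\tilde\rho(t)\|^2+\varepsilon^2\|\tilde j(t)\|^2+\varepsilon^4\|\tilde w(t)\|^2$. The $\varepsilon$-matched weights $(1,\varepsilon^2,\varepsilon^4)$ mirror the decomposition $f=\rho+\varepsilon j+\varepsilon^2 w$ and are precisely what must replace the single-variable bookkeeping of $\tilde f$ used in Theorem~\ref{Thm: convergence of the APNN solution-classical} if the $1/\varepsilon$ blow-up is to be avoided; moreover, the $\varepsilon$-weighted initial residual inside $\mathcal{R}^\varepsilon_{\mathrm{initial}}$ has been designed exactly to control $E(0)$.

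The derivation would begin by testing the three error equations (with residuals $d_\theta^{(1)},d_\theta^{(2)},d_\theta^{(3)}$ as in \eqref{d1}--\eqref{d3}) against $\tilde\rho$, $\tilde j$, and $\varepsilon^2\tilde w$ respectively, and integrating over the appropriate domains. Two key cancellations then remove the coupling contributions: the $\tilde\rho$--$\tilde j$ cross sums to $\int_\mathcal{D}\int_{\Omega^+} v\partial_x(\tilde\rho\tilde j)\,dv\,dx=0$ by periodicity, and the $\tilde j$--$\tilde w$ cross sums (thanks to the multiplicative factor $\varepsilon^2$ on the third equation) to $\varepsilon^2\int_\mathcal{D}\int_{\Omega^+} v\partial_x(\tilde j\tilde w)\,dv\,dx=0$. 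The potentially dangerous $\varepsilon^2\int\int\langle v\partial_x\tilde j\rangle\tilde w\,dv\,dx$ factors as $\varepsilon^2\int\langle v\partial_x\tilde j\rangle\langle\tilde w\rangle\,dx$ and vanishes because the constraint $\langle w\rangle=0$ is preserved by the APNN through architectural design (see Section~\ref{Sec: Structure-preserving mechanism}). What remains is the identity $\tfrac{1}{2}\tfrac{d}{dt}E+\|\tilde j\|^2+\varepsilon^2\|\tilde w\|^2=\int d_\theta^{(1)}\tilde\rho\,dx+\int\int d_\theta^{(2)}\tilde j\,dv\,dx+\varepsilon^2\int\int d_\theta^{(3)}\tilde w\,dv\,dx$, to which Young's inequality (absorbing $\|\tilde j\|^2$ and $\varepsilon^2\|\tilde w\|^2$ into the dissipation) and Gronwall's inequality apply in routine fashion, yielding $\int_0^T E(t)\,dt\leq C\,\mathcal{R}^\varepsilon_{\mathrm{APNN}}$.

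The hard part — and the origin of the additive $\varepsilon^2$ in the stated bound — is that $\tilde\rho$ carries no intrinsic dissipation in this identity, so a bound on $\|\tilde\rho\|^2$ sharper than what $E$ provides is needed for genuine $\varepsilon$-uniformity. To close the analysis I would substitute the algebraic relation $\tilde j=-v\partial_x\tilde\rho+d_\theta^{(2)}-\varepsilon^2\partial_t\tilde j-\varepsilon^2 v\partial_x\tilde w$ (read off the second error equation) back into the first, producing an effective diffusion identity $\partial_t\tilde\rho-\tfrac{1}{3}\partial_{xx}\tilde\rho=d_\theta^{(1)}-\langle v\partial_x d_\theta^{(2)}\rangle+\varepsilon^2 R$ with remainder $R=\partial_t\langle v\partial_x\tilde j\rangle+\tfrac{1}{3}\partial_{xx}\tilde w$. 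Using the $H^1$ regularity of $(\rho,j,w)$ together with the approximation property from Theorem~\ref{Thm: convergence of the loss fuction}, the remainder $R$ is bounded in $L^2$ independently of $\varepsilon$, so a standard parabolic energy estimate on the effective diffusion equation delivers $\|\tilde\rho\|^2\leq C(\mathcal{R}^\varepsilon_{\mathrm{APNN}}+\varepsilon^2)$. Combining this refined $\tilde\rho$ bound with the weighted estimate already established for $\varepsilon^2\|\tilde j\|^2+\varepsilon^4\|\tilde w\|^2$ and invoking the triangle inequality produces the desired uniform bound $\mathcal{R}^\varepsilon_{\mathrm{total}}\leq C(\mathcal{R}^\varepsilon_{\mathrm{APNN}}+\varepsilon^2)$, with $C$ independent of $\varepsilon$.
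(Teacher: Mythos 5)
Your first two paragraphs reproduce the paper's own argument almost exactly: the same weighted energy $E(t)=\|\tilde\rho\|^2+\varepsilon^2\|\tilde j\|^2+\varepsilon^4\|\tilde w\|^2$ obtained by testing the three error equations with $\tilde\rho$, $\tilde j$, $\varepsilon^2\tilde w$, the same cancellation $\int_{\mathcal{D}}\int_{\Omega^+}v\partial_x\big(\tilde j(\tilde\rho+\varepsilon^2\tilde w)\big)\,\mathrm{d}v\mathrm{d}x=0$ under periodicity, and the same Young--Gronwall closure with $E(0)$ controlled by the $\varepsilon$-weighted initial residual. The one genuine deviation is your treatment of the term $\varepsilon^2\int_{\mathcal{D}}\int_{\Omega^+}\langle v\partial_x\tilde j\rangle\tilde w\,\mathrm{d}v\mathrm{d}x$: you annihilate it by factoring out $\langle\tilde w\rangle$ and invoking $\langle w\rangle=\langle w_\theta\rangle=0$, whereas the paper bounds it by Young's inequality as $\tfrac{\varepsilon^2}{2}\|\partial_x\tilde j\|^2+\tfrac{\varepsilon^2}{2}\|\tilde w\|^2$, absorbs the second piece into the dissipation, and lets the first piece survive as the $O(\varepsilon^2)$ source in the final bound. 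Your variant is valid and in fact yields the stronger conclusion $\mathcal{R}_{\mathrm{total}}^\varepsilon\leq C\,\mathcal{R}^\varepsilon_{\mathrm{APNN}}$, at the price of importing the architectural constraint $\langle w_\theta\rangle=0$ from Section~\ref{Sec: Structure-preserving mechanism}, which is not among the hypotheses of the theorem; the paper's route needs only boundedness of $\|\partial_x\tilde j\|_{L^2}$.

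Your third paragraph, however, rests on a misdiagnosis and would not go through as written. No ``sharper'' bound on $\|\tilde\rho\|^2$ is needed: the weight on $\|\tilde\rho\|^2$ inside $E$ is $1$, so Gronwall already delivers $\sup_t\|\tilde\rho(t)\|^2\leq \sup_t E(t)\leq C(\mathcal{R}^\varepsilon_{\mathrm{APNN}}+\varepsilon^2)$ uniformly in $\varepsilon$ --- the lack of dissipation for $\tilde\rho$ only costs the Gronwall exponential, not $\varepsilon$-uniformity. Moreover, the effective-diffusion remainder $R=\partial_t\langle v\partial_x\tilde j\rangle+\tfrac13\partial_{xx}\tilde w$ involves second derivatives of $\tilde j$ and $\tilde w$, which are controlled neither by the $H^1$ regularity assumed in the theorem nor by the $H^1$-level approximation bounds of Theorem~\ref{Thm: convergence of the loss fuction}, so the assertion that $R$ is bounded in $L^2$ independently of $\varepsilon$ is unjustified. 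Fortunately this entire paragraph is superfluous: your first two paragraphs already complete the proof.
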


    \begin{proof}
We follow the notation established in the proof of Theorem \ref{Thm: convergence of the APNN solution-classical}. For the sake of clarity, we recall the system of equations:
        \begin{equation}\label{govern equation of rho,j,w}
            \left\{
        \begin{aligned}
   & \partial_t\tilde{\rho}
    +
    \langle v\partial_x\tilde{j} \rangle
    =
    d_\theta^{(1)}(t,x),\\
    &\varepsilon^2\partial_t\tilde{j}+\tilde{j}
    +
     v\partial_x\tilde{\rho} 
     +
    \varepsilon^2 v\partial_x\tilde{w} 
    =
     d_\theta^{(2)}(t,x),\\
    & \varepsilon^2\partial_t\tilde{w}+\tilde{w}
    +
    v\partial_x\tilde{j}
    -
    \langle v\partial_x\tilde{j} \rangle
    =
     d_\theta^{(3)}(t,x).
      \end{aligned}
            \right.
        \end{equation}
        Here, the notations $d_\theta^{(1)},d_\theta^{(2)},d_\theta^{(3)}$ are defined in \eqref{d1}, \eqref{d2}, and \eqref{d3}.
     Multiply $\eqref{govern equation of rho,j,w}_1$ by $\tilde{\rho}$ and integrate over $\mathcal{D}$ to deduce that
    \begin{equation}\label{rho estimate}
        \frac{1}{2}\frac{\mathrm{d}}{\mathrm{d}t}\|\tilde{\rho}\|_{L^2(\mathcal{D})}^2
        +
        \int_\mathcal{D}\int_{\Omega^+}
v\partial_x\tilde{j}\,\tilde{\rho}
        \,\mathrm{d}v\mathrm{d}x
        =
        \int_\mathcal{D}d_\theta^{(1)}(t,x)\tilde{\rho}\,
        \mathrm{d}x.
    \end{equation}
Multiply $\eqref{govern equation of rho,j,w}_2, \eqref{govern equation of rho,j,w}_3$ by $\tilde{j}$, $\varepsilon^2\tilde{w}$, respectively, and integrate over $\mathcal{D}\times\Omega^+$ to get
\begin{equation}\label{j estimate}
\begin{aligned}
    \frac{\varepsilon^2}{2}\frac{\mathrm{d}}{\mathrm{d}t}\|\tilde{j}\|_{L^2(\mathcal{D}\times\Omega^+)}^2
    +&
    \|\tilde{j}\|_{L^2(\mathcal{D}\times\Omega^+)}^2\\
    +&
        \int_\mathcal{D}\int_{\Omega^+}
v\partial_x(\tilde{\rho}+\varepsilon^2 \tilde{w})\,\tilde{j}
        \,\mathrm{d}v\mathrm{d}x
        =
        \int_\mathcal{D}\int_{\Omega^+}
        d_\theta^{(2)}(t,x)\tilde{j}\,
        \mathrm{d}v\mathrm{d}x,
        \end{aligned}
\end{equation}
and
\begin{equation}\label{w estimate}
    \begin{aligned}
       \frac{\varepsilon^4}{2}\frac{\mathrm{d}}{\mathrm{d}t}\|\tilde{w}\|_{L^2(\mathcal{D}\times\Omega^+)}^2
    +&
    \varepsilon^2\|\tilde{w}\|_{L^2(\mathcal{D}\times\Omega^+)}^2
    +
        \varepsilon^2\int_\mathcal{D}\int_{\Omega^+}
v\partial_x\tilde{j}\,\tilde{w}
        \,\mathrm{d}v\mathrm{d}x\\
        =&
        \varepsilon^2\int_\mathcal{D}\int_{\Omega^+}
        d_\theta^{(3)}(t,x)\tilde{w}\,
        \mathrm{d}v\mathrm{d}x
        +
        \varepsilon^2\int_\mathcal{D}\int_{\Omega^+}
\langle v\partial_x\tilde{j} \rangle\tilde{w}\,
        \mathrm{d}v\mathrm{d}x.
    \end{aligned}
\end{equation}
Notice that combining the cross term of the left-hand side in the above three equalities and deriving
\begin{equation}
\begin{aligned}
    \int_\mathcal{D}\int_{\Omega^+}&
v\partial_x\tilde{j}\,\tilde{\rho}
        \,\mathrm{d}v\mathrm{d}x
        +
        \int_\mathcal{D}\int_{\Omega^+}
v\partial_x(\tilde{\rho}+\varepsilon^2 \tilde{w})\,\tilde{j}
        \,\mathrm{d}v\mathrm{d}x
        +
    \varepsilon^2\int_\mathcal{D}\int_{\Omega^+}
v\partial_x\tilde{j}\,\tilde{w}
        \,\mathrm{d}v\mathrm{d}x  \\
        =&
        \int_\mathcal{D}\int_{\Omega^+}
v\partial_x\tilde{j}\,(\tilde{\rho}+\varepsilon^2 \tilde{w})
        \,\mathrm{d}v\mathrm{d}x
        +
        \int_\mathcal{D}\int_{\Omega^+}
v\partial_x(\tilde{\rho}+\varepsilon^2 \tilde{w})\,\tilde{j}
        \,\mathrm{d}v\mathrm{d}x\\
        =&
        \int_\mathcal{D}\int_{\Omega^+}
v\partial_x(\tilde{j}(\tilde{\rho}+\varepsilon^2 \tilde{w}))
        \,\mathrm{d}v\mathrm{d}x
        =
        0,
        \end{aligned}
\end{equation}
where the last equality holds since the term vanishes under periodic boundary conditions. In addition, we estimate the right-hand terms of equations \eqref{rho estimate}, \eqref{j estimate}, and \eqref{w estimate} one by one. 
By Young's inequality, one gets
\begin{equation*}
\begin{aligned}
    \int_\mathcal{D}&d_\theta^{(1)}(t,x)\tilde{\rho}
        \,\mathrm{d}x
        +
 \int_\mathcal{D}\int_{\Omega^+}
        d_\theta^{(2)}(t,x)\tilde{\rho}\,
        \mathrm{d}v\mathrm{d}x
        +
        \varepsilon^2\int_\mathcal{D}\int_{\Omega^+}
        d_\theta^{(3)}(t,x)\tilde{w}\,
        \mathrm{d}v\mathrm{d}x\\
        \leq&\,
        \frac{1}{2}\Big(\|d_\theta^{(1)}(t,x)\|_{L^2(\mathcal{D})}^2
   +
       \|d_\theta^{(2)}(t,x)\|_{L^2(\mathcal{D}\times\Omega^+)}^2
+
 \|d_\theta^{(3)}(t,x)\|_{L^2(\mathcal{D}\times\Omega^+)}^2
        \Big)\\
        &+
        \frac{1}{2}\Big(\|\tilde{\rho}\|_{L^2(\mathcal{D})}^2
        +
        \|\tilde{j}\|_{L^2(\mathcal{D}\times\Omega^+)}^2
        +
        \varepsilon^4\|\tilde{w}\|_{L^2(\mathcal{D}\times\Omega^+)}^2
        \Big),
        \end{aligned}
\end{equation*}
and
\begin{equation*}
    \varepsilon^2\int_\mathcal{D}\int_{\Omega^+}
\langle v\partial_x\tilde{j} \rangle\tilde{w}\,
        \mathrm{d}v\mathrm{d}x
        \leq
        \frac{\varepsilon^2}{2}\|\partial_x \tilde{j}\|_{L^2(\mathcal{D}\times\Omega^+)}
        +
        \frac{\varepsilon^2}{2}\|\tilde{w}\|_{L^2(\mathcal{D}\times\Omega^+)}^2.
\end{equation*}
Therefore, we can conclude that
\begin{equation*}
    \begin{aligned}
        \frac{\mathrm{d}}{\mathrm{d}t}
        \big(\|\tilde{\rho}\|_{L^2(\mathcal{D})}^2
+&
\varepsilon^2\|\tilde{j}\|_{L^2(\mathcal{D}\times\Omega^+)}^2
+
\varepsilon^4\|\tilde{w}\|_{L^2(\mathcal{D}\times\Omega^+)}^2
        \big)
        +
        \|\tilde{j}\|_{L^2(\mathcal{D}\times\Omega^+)}^2
        +
        \varepsilon^2\|\tilde{w}\|_{L^2(\mathcal{D}\times\Omega^+)}^2\\
        \leq&
        \big(\|d_\theta^{(1)}(t,x)\|_{L^2(\mathcal{D})}^2
   +
       \|d_\theta^{(2)}(t,x)\|_{L^2(\mathcal{D}\times\Omega^+)}^2
+
 \|d_\theta^{(3)}(t,x)\|_{L^2(\mathcal{D}\times\Omega^+)}^2\\
 &+
 \varepsilon^2\|\partial_x \tilde{j}\|_{L^2(\mathcal{D}\times\Omega^+)}
        \big)
        +
        \big(\|\tilde{\rho}\|_{L^2(\mathcal{D})}^2
        +
        \varepsilon^4\|\tilde{w}\|_{L^2(\mathcal{D}\times\Omega^+)}^2
        \big).
        \end{aligned}
\end{equation*}
An application of Gronwall's inequality concludes the proof.

\end{proof}

\section{Two-dimensional method}\label{Sec: Two-dimensional method}
Building upon the one-dimensional analysis in the previous section, we now generalize the framework to the two-dimensional case. Consider the two-dimensional RTE:
    \begin{equation}\label{linear transport equation2}
        \varepsilon\partial_t f
        +
    \boldsymbol{v}\cdot\nabla_{\bx} f
    =
    \frac{1}{\varepsilon}\Big(\frac{1}{2\pi}\int_{|\boldsymbol{v}|=1}f\,\mathrm{d}\boldsymbol{v}'
    -f\Big),\quad \boldsymbol{x} = (x_1, x_2) \in \mathcal{D}\subset \mathbb{R}^2,\ |\boldsymbol{v}|=1
    \end{equation}
with $\boldsymbol{v}=(\xi,\eta),\ -1\leq \xi,\eta\leq 1$, $\xi^2+\eta^2=1$. 
The periodic boundary is given by $f(t,\boldsymbol{x},\boldsymbol{v}) = f_{\mathrm{BC}}(\boldsymbol{x},\boldsymbol{v}).$
We assume the initial condition that $f(0,\boldsymbol{x},\boldsymbol{v}) = f_{\mathrm{IC}}(\boldsymbol{x},\boldsymbol{v}).$

\subsection{Multiscale parity decomposition method}  We now introduce the multiscale parity decomposition for the two-dimensional RTE. Similarly to the one-dimensional case, for convenience of representation, define the set of all positive $\xi$ and $\eta$ as
\begin{equation}
    \Omega^+
    =
    \big\{\xi>0,\eta>0\,|\,\xi^2+\eta^2=1,-1\leq \xi,\eta\leq 1\big\}.
\end{equation}
We rewrite \eqref{linear transport equation2} in $\xi,\eta\in \Omega^+$ only:
\begin{equation*}
    \begin{gathered}
        \varepsilon\partial_t f(\xi,\eta)+\xi \partial_x f(\xi,\eta)
        +\eta\partial_y f(\xi,\eta)
        =
        \frac{1}{\varepsilon}\big(\rho-f(\xi,\eta)\big),\\
        \varepsilon\partial_t f(-\xi,-\eta)-\xi \partial_x f(-\xi,-\eta)
        -\eta\partial_y f(-\xi,-\eta)
        =
        \frac{1}{\varepsilon}\big(\rho-f(-\xi,-\eta)\big),\\
        \varepsilon\partial_t f(\xi,-\eta)+\xi \partial_x f(\xi,-\eta)
        -\eta\partial_y f(\xi,-\eta)
        =
        \frac{1}{\varepsilon}\big(\rho-f(\xi,-\eta)\big),\\
        \varepsilon\partial_t f(-\xi,\eta)-\xi \partial_x f(-\xi,\eta)
        +\eta\partial_y f(-\xi,\eta)
        =
        \frac{1}{\varepsilon}\big(\rho-f(-\xi,\eta)\big),\\
    \end{gathered}
\end{equation*}
where 
\begin{equation*}
    \rho=
    \frac{1}{2\pi}\int_{|\boldsymbol{v}|=1}f\,\mathrm{d}\boldsymbol{v}'.
\end{equation*}
Introducing the even and odd parities respect to the the vector $(\xi,\eta)$ defined as follows
\begin{equation*}
    \begin{aligned}
        r_1(\xi,\eta)=&
        [f(\xi,-\eta)+f(-\xi,\eta)]  / 2,\\
         r_2(\xi,\eta)=&
        [f(\xi,\eta)+f(-\xi,-\eta)]  / 2,\\
        j_1(\xi,\eta)=&
        [f(\xi,-\eta)-f(-\xi,\eta)]  / 2\varepsilon,\\
         j_2(\xi,\eta)=&
        [f(\xi,\eta)-f(-\xi,-\eta)]  / 2\varepsilon,\\
    \end{aligned}
\end{equation*}
thus we arrive at the system
\begin{equation}\label{whole system}
\left\{
   \begin{aligned}
       \varepsilon^2 \partial_t r_1+ \varepsilon^2\xi\partial_x j_1
       -\varepsilon^2\eta\partial_y j_1
       =&
       \rho-r_1,\\
       \varepsilon^2\partial_t r_2+ \varepsilon^2\xi\partial_x j_2
       +\varepsilon^2 \eta\partial_y j_2
       =&
       \rho-r_2,\\
       \varepsilon^2\partial_t j_1 
       +
       \xi\partial_x r_1
       -\eta\partial_y r_1
       =&
       -j_1,\\
       \varepsilon^2\partial_t j_2 +\xi\partial_x r_2
       +\eta\partial_y r_2
       =&
       -j_2.
   \end{aligned} 
   \right.
\end{equation}
In addition, note the fact:
\begin{equation}
    \rho=
    \frac{2}{\pi}\int_{\Omega^+}
    (\frac{r_1+r_2}{2})
    \,\mathrm{d}\boldsymbol{v}'
    :=\langle \frac{r_1+r_2}{2}\rangle,
\end{equation}
which satisfies the following equation:
\begin{equation}\label{Def: rho equation}
    2\partial_t \rho
    +
    \langle \xi\partial_x (j_1+j_2)+\eta\partial_y (j_2-j_1)\rangle
    =0.   
\end{equation}
Define $\frac{r_1+r_2}{2}=\rho+\varepsilon^2 w$, then we have
\begin{equation}
\begin{aligned}
    2\varepsilon^2\partial_t w
    +&
    \big(\xi\partial_x (j_1+j_2)+\eta\partial_y (j_2-j_1)\big)\\
    &-
    \langle \xi\partial_x (j_1+j_2)+\eta\partial_y (j_2-j_1)\rangle
    +
    2w=0.
    \end{aligned}
\end{equation}
Furthermore, denote $\varphi=r_2-r_1$, subtract $\eqref{whole system}_2$ from $\eqref{whole system}_1$ to deduce that
\begin{equation}
    \varepsilon^2\partial_t\varphi
    +
    \varepsilon^2\xi\partial_x(j_2-j_1)
    +\varepsilon^2\eta\partial_y(j_1+j_2)
    +\varphi
    =0.
\end{equation}
Adding and subtracting $\eqref{whole system}_3$ and $\eqref{whole system}_4$ yields
\begin{equation}
   \varepsilon^2\partial_t(j_1+j_2)+(j_1+j_2)
   +
   2\xi\partial_x(\rho+\varepsilon^2 w)+\eta\partial_y\varphi
   =0,
\end{equation}
and
\begin{equation}\label{Def: j1-j2 equation}
    \varepsilon^2\partial_t(j_2-j_1)+(j_2-j_1)
   +
   \xi\partial_x\varphi+2\eta\partial_y(\rho+\varepsilon^2w)
   =0.
\end{equation}
Thus, similarly to one dimension, we use the multiscale parity decomposition of the distribution function $f(t,\boldsymbol{x},\boldsymbol{v})$, explicitly separating the macroscopic density $\rho(t, \boldsymbol{x})$, $\varphi(t,\boldsymbol{x},\boldsymbol{v})$ at $\mathcal{O}(1)$, the odd term $j_1(t, \boldsymbol{x},\boldsymbol{v})+j_2(t, \boldsymbol{x},\boldsymbol{v})$, $j_2(t, \boldsymbol{x},\boldsymbol{v})-j_1(t, \boldsymbol{x},\boldsymbol{v})$  at $\mathcal{O}(\varepsilon)$, and higher-order corrections $w(t,\boldsymbol{x},\boldsymbol{v})$ at $\mathcal{O}(\varepsilon^2)$, each with definite parity properties, that is,
\begin{equation}\label{Def: scale decomposition-two}
\begin{aligned}
&f(t, \boldsymbol{x}, \boldsymbol{v}) 
=
\underbrace{\rho(t, \boldsymbol{x})+\frac{\varphi(t, \boldsymbol{x}, \boldsymbol{v})}{2}}_{\mathcal{O}(1)}\\
&+
\underbrace{\varepsilon \Big(\frac{j_2(t, \boldsymbol{x}, \boldsymbol{v})+j_1(t, \boldsymbol{x}, \boldsymbol{v})}{2}
+\frac{j_2(t, \boldsymbol{x}, \boldsymbol{v})-j_1(t, \boldsymbol{x}, \boldsymbol{v})}{2}\Big)}_{\mathcal{O}(\varepsilon)} 
+
\underbrace{\varepsilon^2 w(t, \boldsymbol{x}, \boldsymbol{v})}_{\mathcal{O}(\varepsilon^2)}.
\end{aligned}
\end{equation}
Hence, equations \eqref{Def: rho equation}-\eqref{Def: j1-j2 equation} constitute the multiscale parity decomposition framework of equation \eqref{linear transport equation2} to be used for the APNN:
\begin{equation}\label{Def: APNN system 2D}
\left\{
    \begin{aligned}
       & 2\partial_t \rho
    +
    \langle \xi\partial_x (j_1+j_2)+\eta\partial_y (j_2-j_1)\rangle
    =0\\
    &2\varepsilon^2\partial_t w
    +
    \big(\xi\partial_x (j_1+j_2)+\eta\partial_y (j_2-j_1)\big)\\
    &\quad-
    \langle \xi\partial_x (j_1+j_2)+\eta\partial_y (j_2-j_1)\rangle
    +
    2w=0\\
     &\varepsilon^2\partial_t\varphi
    +
    \varepsilon^2\xi\partial_x(j_2-j_1)+\varepsilon^2\eta\partial_y(j_1+j_2)
    +\varphi
    =0\\
     &\varepsilon^2\partial_t(j_1+j_2)+(j_1+j_2)
   +
   2\xi\partial_x(\rho+\varepsilon^2 w)+\eta\partial_y\varphi
   =0\\
   &\varepsilon^2\partial_t(j_2-j_1)+(j_2-j_1)
   +
   \xi\partial_x\varphi+2\eta\partial_y(\rho+\varepsilon^2 w)
   =0
    \end{aligned}
    \right.
\end{equation}
with the constraint $\langle w \rangle = 0$.

As $\varepsilon\rightarrow 0$, one has
\begin{equation}\label{limit1}
\varphi=0,\quad
j_1+j_2=-2\xi\partial_x \rho-\eta\partial_y\varphi,\quad
j_2-j_1=-2\eta\partial_y \rho-\xi\partial_x\varphi,
\end{equation}
and
\begin{equation}\label{limit2}
2w
=
    -\big(\xi\partial_x (j_1+j_2)+\eta\partial_y (j_2-j_1)\big)
    +
    \langle \xi\partial_x (j_1+j_2)+\eta\partial_y (j_2-j_1)\rangle.
\end{equation}
Applying \eqref{limit1} in $\eqref{Def: APNN system 2D}_1$  gives
\begin{equation*}
    \begin{aligned}
        \partial_t\rho
        -
        \langle\xi^2\partial_{xx}\rho+\eta^2\partial_{yy}\rho\rangle
        =0.
    \end{aligned}
\end{equation*}
Adding the above two equations and integrating over $\xi^2+\eta^2=1$, we obtain the diffusion equation
\begin{equation}\label{diffusion euqation2}
    \partial_t\rho
    =
    \frac{1}{2}(\partial_{xx}\rho+\partial_{yy}\rho).
\end{equation}
Furthermore, applying \eqref{limit1} and \eqref{diffusion euqation2} in \eqref{limit2}  yields
\begin{equation*}
    w
    =
    \big(\xi^2-\frac{1}{2}\big)\partial_{xx}\rho
    +
    \big(\eta^2-\frac{1}{2}\big)\partial_{yy}\rho.
\end{equation*}

\subsection{Loss function} 
We denote by $\rho_\theta$ and $\varphi_\theta$ the network approximations to the macroscopic variables $\rho$ and $\varphi$, and by $j_{1,\theta}$, $j_{2,\theta}$, $w_\theta$ the approximations to the microscopic fluxes $j_1$, $j_2$, and $w$, all corresponding to system \eqref{Def: APNN system 2D}. Here, $\theta$ collectively represents all trainable parameters. For the APNN method, the physics-informed loss is defined as the residual of the multiscale 
parity framework in system \eqref{Def: APNN system 2D}. Specifically, we denote
\begin{equation*}
\begin{aligned}
    d^{(1)}_\theta
    :=&
    \partial_t \rho_\theta
    +
    \langle \xi\partial_x (j_{1,\theta}+j_{2,\theta})+\eta\partial_y (j_{2,\theta}-j_{1,\theta})\rangle,\\
     d^{(2)}_\theta
    :=&
    2\varepsilon^2\partial_t w_\theta
    +
    \big(\xi\partial_x (j_{1,\theta}+j_{2,\theta})+\eta\partial_y (j_{2,\theta}-j_{1,\theta})\big)\\
    &-
    \langle \xi\partial_x (j_{1,\theta}+j_{2,\theta})+\eta\partial_y (j_{2,\theta}-j_{1,\theta})\rangle
    +
    2\omega_\theta,\\
    d^{(3)}_\theta
    :=&
   \partial_t\varphi_\theta
    +
   \xi\partial_x(j_{2,\theta}-j_{1,\theta})
   +\eta\partial_y(j_{1,\theta}+j_{2,\theta})
    +\frac{1}{\varepsilon^2}\varphi_\theta,\\
    d^{(4)}_\theta
    :=&
    \varepsilon^2\partial_t(j_{1,\theta}+j_{2,\theta})+(j_{1,\theta}+j_{2,\theta})
   +
   2\xi\partial_x(\rho_\theta+\varepsilon^2 w_\theta)+\eta\partial_y\varphi_\theta,\\
   d^{(5)}_\theta
    :=&
    \varepsilon^2\partial_t(j_{2,\theta}-j_{1,\theta})+(j_{2,\theta}-j_{1,\theta})
   +
   \xi\partial_x\varphi+2\eta\partial_y(\rho_\theta+\varepsilon^2 w_\theta).
    \end{aligned}
\end{equation*}
The corresponding loss function is then defined as:
\begin{equation*}
    \mathcal{R}^\varepsilon_{\mathrm{APNN}}
    =
    \mathcal{R}_{\mathrm{residual}}^\varepsilon
    +
    \mathcal{R}_{\mathrm{initial}}^\varepsilon
    +
     \mathcal{R}_{\mathrm{boundary}}^\varepsilon,
\end{equation*}
where the terms on the right-hand side of the above equation are defined as
\begin{equation*}
    \begin{aligned}
         \mathcal{R}_{\mathrm{residual}}^\varepsilon
         =&
         \frac{1}{|\mathcal{T}\times\mathcal{D}|}\int_{\mathcal{T}}\int_{\mathcal{D}}
         |d^{(1)}_\theta|^2\,\mathrm{d}\boldsymbol{x}\mathrm{d}t
         +
         \frac{1}{|\mathcal{T}\times\mathcal{D}\times \Omega^+|}\sum_{j=2}^5\int_\mathcal{T}\int_{\mathcal{D}}\int_{\Omega^+}
         |d^{(j)}_\theta|^2\,
         \mathrm{d}\boldsymbol{v}\mathrm{d}\boldsymbol{x}\mathrm{d}t,
    \end{aligned}
\end{equation*}
\begin{equation*}
    \begin{aligned}
        \mathcal{R}^\varepsilon_{\mathrm{initial}}
        &=
         \frac{\lambda_1}{|\mathcal{D}|}\int_{\mathcal{D}}|\rho_\theta-\rho_{\mathrm{IC}}|^2\,\mathrm{d}\boldsymbol{x}
         +
        \frac{\lambda_2}{|\mathcal{D}\times \Omega^+|}\int_\mathcal{D}\int_{\Omega^+}
        \varepsilon^4|w_\theta-w_{\mathrm{IC}}|^2
        +
        |\varphi_\theta-\varphi_\mathrm{IC}|^2\\
       & +
        \varepsilon^2\big(|(j_{1,\theta}+j_{2,\theta})-(j_{1,\mathrm{IC}}+j_{2,\mathrm{IC}})|^2
        +
        |(j_{2,\theta}-j_{1,\theta})-(j_{2,\mathrm{IC}}-j_{1,\mathrm{IC}})|^2\big)\,
        \mathrm{d}\boldsymbol{v}\mathrm{d}\boldsymbol{x},
    \end{aligned}
\end{equation*}
and
\begin{equation*}
    \begin{aligned}
        \mathcal{R}^\varepsilon_{\mathrm{boundary}}
        =&
        \frac{\lambda_3}{|\mathcal{T}\times\partial\mathcal{D}|}\int_{\mathcal{T}}
         \int_{\mathcal{D}}|\rho_\theta-\rho_{\mathrm{BC}}|^2\,\mathrm{d}s\mathrm{d}t\\
         &+
        \frac{\lambda_4}{|\mathcal{T}\times\partial\mathcal{D}\times \Omega^+|}\int_{\mathcal{T}}\int_\mathcal{D}\int_{\Omega^+}
        |w_\theta-w_{\mathrm{BC}}|^2
        +
        |\varphi_\theta-\varphi_\mathrm{BC}|^2
        +
        |(j_{1,\theta}+j_{2,\theta})\\
        &-
        (j_{1,\mathrm{BC}}+j_{2,\mathrm{BC}})|^2
        +
        |(j_{2,\theta}-j_{1,\theta})
        -(j_{2,\mathrm{BC}}-j_{1,\mathrm{BC}})|^2\,
        \mathrm{d}\boldsymbol{v}\mathrm{d}s\mathrm{d}t. 
    \end{aligned}
\end{equation*}
Here, $\lambda_i\,(i=1,\cdots,4)$ are hyperparameters as penalty terms. As $\varepsilon\rightarrow 0$, $\mathcal{R}^\varepsilon_{\mathrm{residual}}$ converges to 
\begin{equation*}
    \begin{aligned}
        \mathcal{R}_{\mathrm{residual}}^0
        :=&
        \frac{1}{|\mathcal{T}\times\mathcal{D}|}\int_{\mathcal{T}}\int_{\mathcal{D}}
         |d^{(1)}_\theta|^2\,\mathrm{d}\boldsymbol{x}\mathrm{d}t
         +
         \frac{1}{|\mathcal{T}\times\mathcal{D}\times \Omega^+|}
         \int_\mathcal{T}\int_{\mathcal{D}}\int_{\Omega^+}
         |\varphi_\theta|^2\,\mathrm{d}\boldsymbol{v}\mathrm{d}\boldsymbol{x}\mathrm{d}t\\
        & +
         \frac{1}{|\mathcal{T}\times\mathcal{D}\times \Omega^+|}\sum_{j=2,4,5}\int_\mathcal{T}\int_{\mathcal{D}}\int_{\Omega^+}|\lim_{\varepsilon\rightarrow 0}d^{(j)}_\theta|^2\,
         \mathrm{d}\boldsymbol{v}\mathrm{d}\boldsymbol{x}\mathrm{d}t,
    \end{aligned}
\end{equation*}
which is exactly the loss for the limit equations \eqref{Def: rho equation}--\eqref{Def: j1-j2 equation}. Here
\begin{equation*}
    \begin{aligned}
        \lim_{\varepsilon\rightarrow 0}d^{(2)}_\theta
        =&
        \big(\xi\partial_x (j_{1,\theta}+j_{2,\theta})+\eta\partial_y (j_{2,\theta}-j_{1,\theta})\big)\\
    &-
    \langle \xi\partial_x (j_{1,\theta}+j_{2,\theta})+\eta\partial_y (j_{2,\theta}-j_{1,\theta})\rangle
    +
    2\omega_\theta,\\
    \lim_{\varepsilon\rightarrow 0}d^{(4)}_\theta
    =&
    (j_{1,\theta}+j_{2,\theta})
   +
   2\xi\partial_x\rho_\theta+\eta\partial_y\varphi_\theta,\\
   \lim_{\varepsilon\rightarrow 0}d^{(5)}_\theta
   =&
   (j_{2,\theta}-j_{1,\theta})
   +
   \xi\partial_x\varphi_{\theta}+2\eta\partial_y\rho_\theta.
    \end{aligned}
\end{equation*}

Analogous to the one-dimensional case, the approximation property likewise ensures the convergence of the APNN loss function in two dimensions. 
\begin{theorem}
    Let $\rho\in H^2(\mathcal{T}\times\mathcal{D}),\ \varphi,j_1,j_2,w\in H^2(\mathcal{T}\times\mathcal{D}\times\Omega)$ be the analytic solution to the multiscale parity decomposition system \eqref{Def: APNN system 2D}, and $N>5$ be an integer. Then there exist APNN solutions $\hat{\rho}^N,\hat{\varphi}^N,\hat{j}_1^N,\hat{j}_2^N,\hat{w}^N$ such that
   \begin{equation*}
        \mathcal{R}^\varepsilon_{\mathrm{APNN}}\rightarrow 0,\quad \mbox{as } N\rightarrow \infty.
    \end{equation*}
\end{theorem}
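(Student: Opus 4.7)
The plan is to replicate the roadmap of Theorem~\ref{Thm: convergence of the loss fuction}, now applied to the five-component system~\eqref{Def: APNN system 2D} rather than the three-component 1D system. First I would invoke Lemma~\ref{Lemma: Universal Approximation Theorem} componentwise: since $\rho\in H^2(\mathcal{T}\times\mathcal{D})$ and $\varphi,j_1,j_2,w\in H^2(\mathcal{T}\times\mathcal{D}\times\Omega^+)$, for each integer $N>5$ there exist $\tanh$ neural networks $\hat\rho^N,\hat\varphi^N,\hat j_1^N,\hat j_2^N,\hat w^N$ with each component approximation error bounded by $C(1+\ln N)N^{-1}$ in the corresponding $H^1$ norm. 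These are the candidate APNN solutions $\rho_\theta,\varphi_\theta,j_{1,\theta},j_{2,\theta},w_\theta$.

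Next, because $(\rho,\varphi,j_1,j_2,w)$ satisfies \eqref{Def: APNN system 2D} exactly, I would subtract the five governing equations from the corresponding residual definitions to express each $d_\theta^{(k)}$, $k=1,\dots,5$, as a linear combination of time and spatial derivatives of the errors $\tilde\rho,\tilde\varphi,\tilde j_1,\tilde j_2,\tilde w$, with coefficients that are $\mathcal{O}(1)$, $\mathcal{O}(\varepsilon^2)$, or $\mathcal{O}(1/\varepsilon^2)$ (the last arising only in $d_\theta^{(3)}$). Integration over the appropriate domain -- $\mathcal{T}\times\mathcal{D}$ for $d_\theta^{(1)}$ and $\mathcal{T}\times\mathcal{D}\times\Omega^+$ for the remaining four -- combined with the triangle and Young inequalities then yields
\begin{equation*}
\mathcal{R}_{\mathrm{residual}}^\varepsilon \le C_\varepsilon(1+\ln N)^2 N^{-2},
\end{equation*}
where $C_\varepsilon$ is independent of $N$. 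For $\mathcal{R}_{\mathrm{initial}}^\varepsilon$ and $\mathcal{R}_{\mathrm{boundary}}^\varepsilon$, I would apply the trace inequality, exactly as in the 1D proof, to control the $L^2$ norms on $\{t=0\}$ and on $\partial\mathcal{D}$ by the full $H^1$ norms of the same errors, which again decay like $N^{-1}$. Summing the three contributions gives $\mathcal{R}^\varepsilon_{\mathrm{APNN}}\to 0$ as $N\to\infty$.

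The hard part is essentially bookkeeping rather than analysis: the five unknowns sit on different product domains, so one must match spaces carefully before invoking the trace inequality, and the residuals carry mixed prefactors in $\varepsilon$ that need to be tracked through the $L^2$ bounds. A secondary subtlety is ensuring that $\hat w^N$ can be taken compatible with the constraint $\langle w\rangle=0$; I would either project the raw tanh network approximation onto the mean-zero subspace -- a contraction in $H^1$ that preserves the $(1+\ln N)N^{-1}$ rate -- or build the constraint into the architecture directly, as described in Section~\ref{Sec: Structure-preserving mechanism}. Since the claim is only convergence for fixed $\varepsilon$, the $\varepsilon^{-2}$ factor appearing in $d_\theta^{(3)}$ is absorbed harmlessly into $C_\varepsilon$ and presents no genuine obstacle; a uniform-in-$\varepsilon$ refinement is the content of the later Theorem~\ref{Thm: convergence of the APNN solution-2D}, not of this statement.
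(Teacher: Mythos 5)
Your proposal is correct and follows essentially the same route as the paper, which omits this proof precisely because it ``proceeds along similar lines'' to the one-dimensional Theorem~\ref{Thm: convergence of the loss fuction}: componentwise application of Lemma~\ref{Lemma: Universal Approximation Theorem}, subtraction of the exact equations to bound each residual $d_\theta^{(k)}$, and trace inequalities for the initial and boundary terms. Your additional remarks on the mean-zero constraint for $\hat w^N$ and on absorbing the $\varepsilon^{-2}$ factor from $d_\theta^{(3)}$ into a fixed-$\varepsilon$ constant are sensible refinements consistent with the paper's intent.
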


The proof proceeds along similar lines to that of Theorem \ref{Thm: convergence of the loss fuction}, and is therefore omitted for brevity.

\subsection{Uniform error method}
In the following, we demonstrate that the APNN solution converges uniformly to the analytical solution of the two-dimensional RTE \eqref{linear transport equation2}.
Parallel to our approach in one dimension, the total error $\mathcal{R}_{\mathrm{total}}^\varepsilon$ for the 2D system \eqref{Def: APNN system 2D} is defined as the difference between the analytic and APNN solutions:
    \begin{equation}\label{Def: total error 2D}
    \begin{aligned}
        \mathcal{R}_{\mathrm{total}}^\varepsilon
        =&
        \|f-f_\theta\|_{L^2(\mathcal{T}\times\mathcal{D}\times\Omega^+)}^2.
        \end{aligned}
    \end{equation}
 According to the decomposition \eqref{Def: scale decomposition-two}, it yields that
\begin{equation*}
\begin{aligned}
    \mathcal{R}_{\mathrm{total}}^\varepsilon
    \leq&
    \|\rho-\rho_\theta\|_{L^2(\mathcal{D})}^2
    +
    \|\varphi-\varphi_\theta\|_{L^2(\mathcal{D}\times \Omega^+)}^2
    +
    \varepsilon^4\|w-w_\theta\|_{L^2(\mathcal{D}\times \Omega^+)}^2\\
    +&
    \varepsilon^2\|(j_1+j_2)-(j_{1,\theta}+j_{2,\theta})\|_{L^2(\mathcal{D}\times \Omega^+)}^2
    +
    \varepsilon^2\|(j_2-j_1)-(j_{2,\theta}-j_{1,\theta})\|_{L^2(\mathcal{D}\times \Omega^+)}^2.
    \end{aligned}
\end{equation*}

Through a direct approach, a bound on the total error is established in the following theorem.
\begin{theorem}\label{Thm: convergence of the APNN solution-classical 2D}
    Let $\rho\in H^{1}(\mathcal{T}\times\mathcal{D})$, $\varphi,j_1,j_2,w\in H^{1}(\mathcal{T}\times\mathcal{D}\times \Omega^+)$ be the analytic solution to the multiscale parity decomposition system \eqref{Def: APNN system 2D}. Then
    \begin{equation*}
        \mathcal{R}_{\mathrm{total}}^\varepsilon
        \leq
        \frac{C}{\varepsilon}\mathcal{R}^\varepsilon_\mathrm{APNN},
    \end{equation*}
    where the constant $C>0$ is independent of $\varepsilon$.
\end{theorem}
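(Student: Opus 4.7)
The plan is to mirror the one-dimensional direct argument of Theorem \ref{Thm: convergence of the APNN solution-classical}: reassemble the five tilded equations associated with system \eqref{Def: APNN system 2D} into a single perturbed RTE for the error $\tilde f := f - f_\theta$, then perform an $L^2$ energy estimate and close by Gronwall.

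\emph{Step 1 (RTE reconstruction).} Let $\tilde\rho,\tilde\varphi,\tilde j_1,\tilde j_2,\tilde w$ denote the componentwise errors; each satisfies the corresponding equation of \eqref{Def: APNN system 2D} with right-hand side equal to one of the residuals $d^{(1)}_\theta,\dots,d^{(5)}_\theta$. Using the quadrantwise identity $f(\boldsymbol v) = r_2 + \varepsilon j_2 = \rho + \varepsilon^2 w + \tfrac12\varphi + \varepsilon j_2$ on $\Omega^+$ (and its mirror forms in the other three quadrants), I combine the five tilded equations with matching $\varepsilon$-powers to recover, on each quadrant,
\begin{equation*}
\varepsilon^2 \partial_t \tilde f + \varepsilon\, \boldsymbol v \cdot \nabla_{\boldsymbol x} \tilde f = \tilde\rho - \tilde f + \mathcal{S}_\theta,
\end{equation*}
where $\mathcal{S}_\theta$ is a linear combination of the five residuals whose coefficients carry a factor $\varepsilon^2$ on the macroscopic residuals $d^{(1)}_\theta,d^{(2)}_\theta,d^{(3)}_\theta$ and at worst a single factor of $\varepsilon$ on the odd-parity residuals $d^{(4)}_\theta,d^{(5)}_\theta$.

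\emph{Step 2 (Energy estimate).} Multiplying by $\tilde f$ and integrating over $\mathcal{D}\times\Omega$, the transport term vanishes by periodicity after integration by parts, and the collision term is non-positive by Cauchy--Schwarz:
\begin{equation*}
\int_\Omega (\tilde\rho - \tilde f)\tilde f\, \mathrm{d}\boldsymbol v \;=\; 2\pi|\tilde\rho|^2 - \|\tilde f\|^2_{L^2_{\boldsymbol v}} \;\le\; 0.
\end{equation*}
Applying Young's inequality to $\int \mathcal{S}_\theta \tilde f$, the worst $\varepsilon$-scaling arises from the odd-parity contributions $\varepsilon\, d^{(4)}_\theta,\ \varepsilon\, d^{(5)}_\theta$, yielding after dividing by $\varepsilon^2/2$
\begin{equation*}
\frac{\mathrm{d}}{\mathrm{d}t} \|\tilde f\|^2_{L^2(\mathcal{D}\times\Omega)} \;\le\; C\|\tilde f\|^2 \;+\; \frac{C}{\varepsilon}\sum_{i=1}^{5} \|d^{(i)}_\theta\|^2_{L^2}.
\end{equation*}
Gronwall's inequality on $[0,T]$, together with standard trace inequalities that absorb the initial and boundary contributions into $\mathcal{R}^\varepsilon_{\mathrm{initial}}$ and $\mathcal{R}^\varepsilon_{\mathrm{boundary}}$, gives the claimed bound $\mathcal{R}^\varepsilon_{\mathrm{total}} \le (C/\varepsilon)\, \mathcal{R}^\varepsilon_{\mathrm{APNN}}$.

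\emph{Main obstacle.} The principal difficulty is Step 1: identifying the correct $\varepsilon$-weighted linear combination of the five-component system that reassembles into a single RTE for $\tilde f$ consistently across all four velocity quadrants, since the decomposition \eqref{Def: scale decomposition-two} is written explicitly only on $\Omega^+$. One must exploit the parity structure of $\tilde r_1,\tilde r_2,\tilde j_1,\tilde j_2,\tilde w$ under the reflections $\xi\mapsto-\xi$ and $\eta\mapsto-\eta$ to confirm that the reconstructed perturbed RTE holds on all of $\Omega$, so that the periodicity and collision-contraction arguments apply cleanly and the $\varepsilon$-prefactors in $\mathcal{S}_\theta$ are not inadvertently worsened. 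Once this bookkeeping is in place, the energy estimate and Gronwall argument are a routine extension of the one-dimensional proof.
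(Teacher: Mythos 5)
Your proposal follows essentially the same route as the paper: the paper likewise reassembles the five error equations into the single perturbed transport equation $\varepsilon\partial_t\tilde f+\boldsymbol v\cdot\nabla_{\boldsymbol x}\tilde f=\tfrac{1}{\varepsilon}(\tilde\rho-\tilde f)+\tfrac12\big(\varepsilon\sum_{i=1}^3 d^{(i)}_\theta+d^{(4)}_\theta+d^{(5)}_\theta\big)$, with exactly the $\varepsilon$-weights you predict, and then invokes the one-dimensional energy/Gronwall argument, where the odd-parity residuals $d^{(4)}_\theta,d^{(5)}_\theta$ produce the $1/\varepsilon$ factor. The quadrant bookkeeping you flag as the main obstacle is real but is handled implicitly in the paper by the parity relations $f(\pm\xi,\pm\eta)=r_i\pm\varepsilon j_i$, so your plan is sound and matches the published proof.
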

\begin{proof}
Denote $\tilde{\rho}=\rho_\theta-\rho$, $\tilde{w}=w_\theta-w$, $\tilde{\varphi}=\varphi_\theta-\varphi$, and $\tilde{j}_i=j_{i,\theta}-j_i$ for $i=1,2$.
    Since $\rho$ satisfies \eqref{Def: rho equation}, we have
    \begin{equation}\label{tilde rho equation}
        2\partial_t \tilde{\rho}
    +
    \langle \xi\partial_x (\tilde{j}_1+\tilde{j}_2)+\eta\partial_y (\tilde{j}_2-\tilde{j}_1)\rangle
    =d^{(1)}_\theta.
    \end{equation}
    Similarly, it yields that
    \begin{equation}\label{tilde w equation}
        \begin{aligned}
            2\varepsilon^2\partial_t \tilde{w}
    +&
    \big(\xi\partial_x (\tilde{j}_1+\tilde{j}_2)+\eta\partial_y (\tilde{j}_2-\tilde{j}_1)\big)\\
    &-
    \langle \xi\partial_x (\tilde{j}_1+\tilde{j}_2)+\eta\partial_y (\tilde{j}_2-\tilde{j}_1)\rangle
    +
    2\tilde{w}=d^{(2)}_\theta,
        \end{aligned}
    \end{equation}
    \begin{equation}\label{tilde r1-r2 equation}
   \partial_t\tilde{\varphi}
    +
    \xi\partial_x(\tilde{j}_2-\tilde{j}_1)+\eta\partial_y(\tilde{j}_1+\tilde{j}_2)
    +
    \frac{1}{\varepsilon^2}\tilde{\varphi}
    =d^{(3)}_\theta,
\end{equation}
\begin{equation}\label{tilde j1+j2 equation}
   \varepsilon^2\partial_t(\tilde{j}_1+\tilde{j}_2)+(\tilde{j}_1+\tilde{j}_2)
   +
   2\xi\partial_x(\tilde{\rho}+\varepsilon^2\tilde{w})+\eta\partial_y\tilde{\varphi}
   =d^{(4)}_\theta,
\end{equation}
and
\begin{equation}\label{tilde j1-j2 equation}
    \varepsilon^2\partial_t(\tilde{j}_2-\tilde{j}_1)+(\tilde{j}_2-\tilde{j}_1)
   +
   \xi\partial_x\tilde{\varphi}+2\eta\partial_y(\tilde{\rho}+\varepsilon^2\tilde{w})
   =d^{(5)}_\theta.
\end{equation}
 Recalling the decomposition of $f$ in \eqref{Def: scale decomposition-two}, we use the notation $\tilde{f}=f-f_\theta$ with 
 \begin{equation*}
\begin{aligned}
&\tilde{f} 
=
\tilde{\rho} +\frac{\tilde{\varphi} }{2}
+
\varepsilon \Big(\frac{\tilde{j}_2+\tilde{j}_1}{2}
+\frac{\tilde{j}_2-\tilde{j}_1}{2}\Big) 
+
\varepsilon^2 \tilde{w}.
\end{aligned}
\end{equation*}
Hence, we deduce that
\begin{equation*}
    \begin{aligned}
     \varepsilon\partial_t\tilde{f}
        +\xi\partial_x\tilde{f}+\eta\partial_y\tilde{f}
        =
        \frac{1}{\varepsilon}(\tilde{\rho}-\tilde{f})
        +
        \frac{1}{2}\Big(\varepsilon\sum_{i=1}^3d^{(i)}_\theta+d^{(4)}_\theta+d^{(5)}_\theta\Big).
    \end{aligned}
\end{equation*}
Observing that its structure is analogous to the one presented in the proof of Theorem \ref{Thm: convergence of the APNN solution-classical}, the desired estimate follows directly by analogous reasoning.

\end{proof}

Building on this multiscale parity decomposition framework \eqref{Def: APNN system 2D}, we are motivated to establish an improved estimate for the error.
\begin{theorem}\label{Thm: convergence of the APNN solution-2D}
    Let $\rho\in H^{1}(\mathcal{T}\times\mathcal{D})$, $\varphi,j_1,j_2,w\in H^{1}(\mathcal{T}\times\mathcal{D}\times \Omega^+)$ be the analytic solution to the multiscale parity decomposition system \eqref{Def: APNN system 2D}. Then
    \begin{equation*}
        \mathcal{R}_{\mathrm{total}}^\varepsilon
        \leq
        C(\mathcal{R}^\varepsilon_\mathrm{APNN}+\varepsilon^2),
    \end{equation*}
    where the constant $C>0$ is independent of $\varepsilon$.
\end{theorem}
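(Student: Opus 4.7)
The plan is to adapt the weighted energy argument from the proof of Theorem~\ref{Thm: convergence of the APNN solution} to the richer five-equation error system \eqref{tilde rho equation}--\eqref{tilde j1-j2 equation}. Retain the error variables $\tilde{\rho}$, $\tilde{\varphi}$, $\tilde{w}$, $\tilde{j}_1$, $\tilde{j}_2$ from the proof of Theorem~\ref{Thm: convergence of the APNN solution-classical 2D}. In view of the triangle-inequality decomposition of $\mathcal{R}^\varepsilon_{\mathrm{total}}$ displayed just before that theorem, it suffices to show that the composite energy
\begin{equation*}
E(t) := \|\tilde{\rho}\|^2 + \|\tilde{\varphi}\|^2 + \varepsilon^4\|\tilde{w}\|^2 + \varepsilon^2\|\tilde{j}_1+\tilde{j}_2\|^2 + \varepsilon^2\|\tilde{j}_2-\tilde{j}_1\|^2
\end{equation*}
(with norms taken over $\mathcal{D}$ or $\mathcal{D}\times\Omega^+$ as appropriate) satisfies $E(t)\leq C\big(\mathcal{R}^\varepsilon_{\mathrm{APNN}}+\varepsilon^2\big)$ with an $\varepsilon$-independent constant.

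The pivotal step is a choice of multipliers that makes every spatial-gradient coupling collapse into a total divergence. Test \eqref{tilde rho equation} by $\tilde{\rho}$, \eqref{tilde r1-r2 equation} by $\tfrac{1}{\pi}\tilde{\varphi}$, \eqref{tilde w equation} by $\tfrac{2\varepsilon^2}{\pi}\tilde{w}$, \eqref{tilde j1+j2 equation} by $\tfrac{1}{\pi}(\tilde{j}_1+\tilde{j}_2)$, and \eqref{tilde j1-j2 equation} by $\tfrac{1}{\pi}(\tilde{j}_2-\tilde{j}_1)$; integrate over $\mathcal{D}\times\Omega^+$ and add. Using that $\tilde{\rho}$ is $\boldsymbol{v}$-independent, the normalisation $\langle g\rangle=\tfrac{2}{\pi}\int_{\Omega^+}g\,\mathrm{d}\boldsymbol{v}$, and the constraint $\langle\tilde{w}\rangle=0$ enforced by the structure-preserving architecture of Section~\ref{Sec: Structure-preserving mechanism} (which eliminates the stray $\langle\cdot\rangle\tilde{w}$ contribution from the $\tilde{w}$-estimate), the twelve cross-coupling contributions regroup pairwise into the spatial divergence
\begin{equation*}
\begin{aligned}
\tfrac{1}{\pi}\int_{\mathcal{D}\times\Omega^+}\Big\{&\xi\,\partial_x\!\big[2(\tilde{j}_1+\tilde{j}_2)(\tilde{\rho}+\varepsilon^2\tilde{w})+(\tilde{j}_2-\tilde{j}_1)\tilde{\varphi}\big] \\
&+\eta\,\partial_y\!\big[2(\tilde{j}_2-\tilde{j}_1)(\tilde{\rho}+\varepsilon^2\tilde{w})+(\tilde{j}_1+\tilde{j}_2)\tilde{\varphi}\big]\Big\}\,\mathrm{d}\boldsymbol{v}\mathrm{d}\boldsymbol{x},
\end{aligned}
\end{equation*}
which vanishes under periodic boundary conditions. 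This is the direct 2D analogue of the identity $\int v\partial_x[\tilde{j}(\tilde{\rho}+\varepsilon^2\tilde{w})]=0$ central to the proof of Theorem~\ref{Thm: convergence of the APNN solution}.

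What survives on the left-hand side are non-negative dissipations proportional to $\|\tilde{j}_1+\tilde{j}_2\|^2+\|\tilde{j}_2-\tilde{j}_1\|^2+\tfrac{1}{\varepsilon^2}\|\tilde{\varphi}\|^2+\varepsilon^2\|\tilde{w}\|^2$, while Young's inequality applied to the residuals $d^{(k)}_\theta$ bounds the right-hand side by $C\|d^{(k)}_\theta\|_{L^2}^2+\delta\,E(t)$ for arbitrarily small $\delta>0$. A single borderline term of type $\varepsilon^2\|\nabla_{\boldsymbol{x}}(\tilde{j}_1\pm\tilde{j}_2)\|_{L^2}^2$, exactly analogous to the $\varepsilon^2\|\partial_x\tilde{j}\|_{L^2}^2$ appearing in the proof of Theorem~\ref{Thm: convergence of the APNN solution}, is controlled by the assumed $H^1$-regularity of the exact solution and produces the $\mathcal{O}(\varepsilon^2)$ remainder. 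Gronwall's inequality applied to the resulting differential inequality $\tfrac{d}{dt}E(t)\leq C\,E(t)+C(\mathcal{R}^\varepsilon_{\mathrm{APNN}}+\varepsilon^2)$ on $\mathcal{T}$ then concludes the argument.

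The main technical obstacle is the coefficient bookkeeping in the cross-term cancellation: with two spatial directions, two odd-flux combinations $\tilde{j}_1\pm\tilde{j}_2$, and the extra macroscopic unknown $\tilde{\varphi}$, each cross-term appears in exactly two equations whose natural coefficients differ by factors of $2$, $\pi$, and $\varepsilon^2$ scattered throughout \eqref{Def: APNN system 2D}. Identifying the weighting above that makes all twelve cross-terms pair up into the single divergence---and, in particular, recognising that the $\varepsilon^{-2}$-scaled damping already present in \eqref{tilde r1-r2 equation} is what provides the uniform $\|\tilde{\varphi}\|^2$ control without needing any $\varepsilon$-dependent weighting on the $\tilde{\varphi}$-test function---is the one genuinely delicate point. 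Once it is verified, the remainder of the argument parallels the 1D proof almost verbatim.
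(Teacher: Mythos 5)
Your proposal follows essentially the same route as the paper's proof: the same multipliers (up to a harmless $1/\pi$ normalisation), the same regrouping of all cross-coupling terms into a single spatial divergence that vanishes under periodic boundary conditions, Young's inequality on the residuals $d^{(k)}_\theta$, and Gronwall's inequality on the same composite energy $E(t)$. The only (harmless) discrepancy is internal to your write-up: you claim the $\langle\cdot\rangle\tilde{w}$ term is annihilated by $\langle\tilde{w}\rangle=0$ and yet also cite the borderline term $\varepsilon^2\|\nabla_{\boldsymbol{x}}(\tilde{j}_1\pm\tilde{j}_2)\|_{L^2}^2$ as the source of the $\mathcal{O}(\varepsilon^2)$ remainder, whereas that borderline term arises precisely from Young's inequality applied to the $\langle\cdot\rangle\tilde{w}$ term (this is how the paper obtains the $\varepsilon^2$); either path yields the stated bound.
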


\begin{proof}
Following Theorem \ref{Thm: convergence of the APNN solution-classical 2D}, we continue to use the notations established in \eqref{tilde rho equation}-\eqref{tilde j1+j2 equation}.
Multiply \eqref{tilde rho equation} by $\tilde{\rho}$ and integrate over $\mathcal{D}$ to deduce that
\begin{equation}\label{proof of Thm: convergence of the APNN solution-2D rho}
    \frac{\mathrm{d}}{\mathrm{d}t}\|\tilde{\rho}\|_{L^2(\mathcal{D})}^2
    +
    \frac{2}{\pi}\int_{\mathcal{D}}\int_{\Omega^+}\big(\xi\partial_x (\tilde{j}_1+\tilde{j}_2)+\eta\partial_y (\tilde{j}_2-\tilde{j}_1)\big)\tilde{\rho}\,
    \mathrm{d}\boldsymbol{v}\mathrm{d}\boldsymbol{x}
    =
    \int_{\mathcal{D}}d^{(1)}_\theta\tilde{\rho}\,\mathrm{d}\boldsymbol{x}.
\end{equation}
We multiply \eqref{tilde w equation}, \eqref{tilde r1-r2 equation}, \eqref{tilde j1+j2 equation}, \eqref{tilde j1-j2 equation}  by $\varepsilon^2\tilde{w}$, $\tilde{\varphi}$, $\tilde{j}_1+\tilde{j}_2$, and $\tilde{j}_2-\tilde{j}_1$, respectively, and integrate over $\mathcal{D}\times \Omega^+$, we get
\begin{equation}\label{proof of Thm: convergence of the APNN solution-2D w}
    \begin{aligned}
        \varepsilon^4&\frac{\mathrm{d}}{\mathrm{d}t}\|\tilde{w}\|_{L^2(\mathcal{D}\times \Omega^+)}^2
        +
        2\varepsilon^2\|\tilde{w}\|_{L^2(\mathcal{D}\times \Omega^+)}^2\\
       & +
\varepsilon^2\int_{\mathcal{D}}\int_{\Omega^+}
\big(\xi\partial_x (\tilde{j}_1+\tilde{j}_2)+\eta\partial_y (\tilde{j}_2-\tilde{j}_1)\big)\tilde{w}\,
        \mathrm{d}\boldsymbol{v}\mathrm{d}\boldsymbol{x}\\
        &=
        \varepsilon^2\int_{\mathcal{D}}\int_{\Omega^+}
        d^{(2)}_\theta\tilde{w}\,
        \mathrm{d}\boldsymbol{v}\mathrm{d}\boldsymbol{x}
        +
        \varepsilon^2\int_{\mathcal{D}}\int_{\Omega^+}
        \langle \xi\partial_x (\tilde{j}_1+\tilde{j}_2)+\eta\partial_y (\tilde{j}_2-\tilde{j}_1)\rangle\tilde{w}\,
        \mathrm{d}\boldsymbol{v}\mathrm{d}\boldsymbol{x},
    \end{aligned}
\end{equation}
\begin{equation}\label{proof of Thm: convergence of the APNN solution-2D varphi}
    \begin{aligned}
        &\frac{1}{2}\frac{\mathrm{d}}{\mathrm{d}t}\|\tilde{\varphi}\|_{L^2(\mathcal{D}\times \Omega^+)}^2
        +
        \frac{1}{\varepsilon^2}\|\tilde{\varphi}\|_{L^2(\mathcal{D}\times \Omega^+)}^2\\
        &+
        \int_{\mathcal{D}}\int_{\Omega^+}
        \big(\xi\partial_x(\tilde{j}_2-\tilde{j}_1)+\eta\partial_y(\tilde{j}_1+\tilde{j}_2)\big)\tilde{\varphi}\,
        \mathrm{d}\boldsymbol{v}\mathrm{d}\boldsymbol{x}
        =
        \int_{\mathcal{D}}\int_{\Omega^+}
        d^{(3)}_\theta\tilde{\varphi}\,
        \mathrm{d}\boldsymbol{v}\mathrm{d}\boldsymbol{x},
    \end{aligned}
\end{equation}
\begin{equation}\label{proof of Thm: convergence of the APNN solution-2D j1+j2}
    \begin{aligned}
        \frac{\varepsilon^2}{2}\frac{\mathrm{d}}{\mathrm{d}t}
        \|\tilde{j}_1+\tilde{j}_2\|_{L^2(\mathcal{D}\times \Omega^+)}^2
        &+
        \int_{\mathcal{D}}\int_{\Omega^+}
\big(2\xi\partial_x(\tilde{\rho}+\varepsilon^2\tilde{w})+\eta\partial_y\tilde{\varphi}\big)(\tilde{j}_1+\tilde{j}_2)\,
        \mathrm{d}\boldsymbol{v}\mathrm{d}\boldsymbol{x}\\
        +
        \|\tilde{j}_1+\tilde{j}_2\|_{L^2(\mathcal{D}\times \Omega^+)}^2
       & =
        \int_{\mathcal{D}}\int_{\Omega^+}
        d^{(4)}_\theta(\tilde{j}_1+\tilde{j}_2)\,
        \mathrm{d}\boldsymbol{v}\mathrm{d}\boldsymbol{x},
    \end{aligned}
\end{equation}
and
\begin{equation}\label{proof of Thm: convergence of the APNN solution-2D j1-j2}
    \begin{aligned}
        \frac{\varepsilon^2}{2}\frac{\mathrm{d}}{\mathrm{d}t}\|\tilde{j}_2-\tilde{j}_1\|_{L^2(\mathcal{D}\times \Omega^+)}^2
        &+
        \int_{\mathcal{D}}\int_{\Omega^+}
\big(\xi\partial_x\tilde{\varphi}+2\eta\partial_y(\tilde{\rho}+\varepsilon^2\tilde{w})\big)(\tilde{j}_2-\tilde{j}_1)\,
        \mathrm{d}\boldsymbol{v}\mathrm{d}\boldsymbol{x}\\
        +
         \|\tilde{j}_2-\tilde{j}_1\|_{L^2(\mathcal{D}\times \Omega^+)}^2
        & =
        \int_{\mathcal{D}}\int_{\Omega^+}
        d^{(5)}_\theta(\tilde{j}_2-\tilde{j}_1)\,
        \mathrm{d}\boldsymbol{v}\mathrm{d}\boldsymbol{x},
    \end{aligned}
\end{equation}
We begin by estimating the cross-terms on the left-hand sides of \eqref{proof of Thm: convergence of the APNN solution-2D rho}--\eqref{proof of Thm: convergence of the APNN solution-2D j1-j2}. Define
\begin{equation*}
    \begin{aligned}
    I_1
:=&\
2\big(\xi\partial_x (\tilde{j}_1+\tilde{j}_2)+\eta\partial_y (\tilde{j}_2-\tilde{j}_1)\big)\tilde{\rho}
        +
        2\varepsilon^2\big(\xi\partial_x (\tilde{j}_1+\tilde{j}_2)+\eta\partial_y (\tilde{j}_2-\tilde{j}_1)\big)\tilde{w}\\
        &+
        \big(\xi\partial_x(\tilde{j}_2-\tilde{j}_1)+\eta\partial_y(\tilde{j}_1+\tilde{j}_2)\big)\tilde{\varphi},\\
         I_2
        :=&\
        \big(2\xi\partial_x(\tilde{\rho}+\varepsilon^2\tilde{w})+\eta\partial_y\tilde{\varphi}\big)(\tilde{j}_1+\tilde{j}_2)
    +
    \big(\xi\partial_x\tilde{\varphi}+2\eta\partial_y(\tilde{\rho}+\varepsilon^2\tilde{w})\big)(\tilde{j}_2-\tilde{j}_1).
    \end{aligned}
\end{equation*}
Then a direct computation shows
\begin{equation*}
    \begin{aligned}
      I_1+I_2 
      =&
      \xi\partial_x\big((\tilde{j}_2-\tilde{j}_1)\tilde{\varphi}\big)
      +
      \eta\partial_y\big((\tilde{j}_1+\tilde{j}_2)\tilde{\varphi}\big)\\
      &+
       2\xi\partial_x\big((\tilde{\rho}+\varepsilon^2\tilde{w})(\tilde{j}_1+\tilde{j}_2)\big)
      +
      2\eta\partial_y\big((\tilde{\rho}+\varepsilon^2\tilde{w})(\tilde{j}_2-\tilde{j}_1)\big),
    \end{aligned}
\end{equation*}
and thus, by the divergence theorem and the periodic boundary condition, we have
\begin{equation}
\begin{aligned}
    \int_{\mathcal{D}}&\int_{\Omega^+}
    I_1+I_2\,
    \mathrm{d}\boldsymbol{v}\mathrm{d}\boldsymbol{x}=0.
    \end{aligned}
\end{equation}
 Furthermore, by applying Young's inequality to the terms on the right-hand side of \eqref{proof of Thm: convergence of the APNN solution-2D rho}--\eqref{proof of Thm: convergence of the APNN solution-2D varphi}, we obtain
\begin{equation*}
    \begin{aligned}
        \pi\int_{\mathcal{D}}d^{(1)}_\theta\tilde{\rho}\,\mathrm{d}\boldsymbol{x}
        &+
        2\varepsilon^2\int_{\mathcal{D}}\int_{\Omega^+}
        d^{(2)}_\theta\tilde{w}\,
        \mathrm{d}\boldsymbol{v}\mathrm{d}\boldsymbol{x}
        +
        \int_{\mathcal{D}}\int_{\Omega^+}
        d^{(3)}_\theta\tilde{\varphi}\,
        \mathrm{d}\boldsymbol{v}\mathrm{d}\boldsymbol{x}\\
        \leq&
        C\Big(\|d^{(1)}_\theta\|_{L^2(\mathcal{D})}^2
        +
        \|d^{(2)}_\theta\|_{L^2(\mathcal{D}\times \Omega^+)}^2
        +
        \|d^{(3)}_\theta\|_{L^2(\mathcal{D}\times \Omega^+)}^2\Big)\\
       & +
        \Big(\pi\|\tilde{\rho}\|_{L^2(\mathcal{D})}^2
        +
        \varepsilon^4\|\tilde{w}\|_{L^2(\mathcal{D}\times \Omega^+)}^2
        +
        \|\tilde{\varphi}\|_{L^2(\mathcal{D}\times \Omega^+)}^2\Big),
    \end{aligned}
\end{equation*}
and
\begin{equation*}
\begin{aligned}
    2\varepsilon^2\int_{\mathcal{D}}\int_{\Omega^+}&
        \langle \xi\partial_x (\tilde{j}_1+\tilde{j}_2)+\eta\partial_y (\tilde{j}_2-\tilde{j}_1)\rangle\tilde{w}\,
        \mathrm{d}\boldsymbol{v}\mathrm{d}\boldsymbol{x}\\
        \leq&
        C\varepsilon^2\Big(\|\tilde{j}_1\|_{L^2(\mathcal{D}\times \Omega^+)}^2
        +
        \|\tilde{j}_2\|_{L^2(\mathcal{D}\times \Omega^+)}^2\Big)
        +
        \varepsilon^2\|\tilde{w}\|_{L^2(\mathcal{D}\times \Omega^+)}^2. 
        \end{aligned}
\end{equation*}
Similarly, for the right-hand side terms of \eqref{proof of Thm: convergence of the APNN solution-2D j1+j2}, \eqref{proof of Thm: convergence of the APNN solution-2D j1-j2}, we have
\begin{equation*}
\begin{aligned}
     \int_{\mathcal{D}}\int_{\Omega^+}
        d^{(4)}_\theta(\tilde{j}_1&+\tilde{j}_2)\,
        \mathrm{d}\boldsymbol{v}\mathrm{d}\boldsymbol{x}
        +
        \int_{\mathcal{D}}\int_{\Omega^+}
        d^{(5)}_\theta(\tilde{j}_2-\tilde{j}_1)\,
        \mathrm{d}\boldsymbol{v}\mathrm{d}\boldsymbol{x}
        \leq
        \frac{1}{2}
        \Big(\|d^{(4)}_\theta\|_{L^2(\mathcal{D}\times \Omega^+)}^2\\
        &+
        \|d^{(5)}_\theta\|_{L^2(\mathcal{D}\times \Omega^+)}^2
        +
        \|\tilde{j}_1+\tilde{j}_2\|_{L^2(\mathcal{D}\times \Omega^+)}^2
        +
        \|\tilde{j}_2-\tilde{j}_1\|_{L^2(\mathcal{D}\times \Omega^+)}^2\Big).
        \end{aligned}
\end{equation*}
Hence, we conclude that
\begin{equation*}
    \begin{aligned}
        \frac{\mathrm{d}}{\mathrm{d}t}E(t)
        \leq&
        C\Big(\|d^{(1)}_\theta\|_{L^2(\mathcal{D})}^2
        +
        \sum_{j=2}^5\|d^{(j)}_\theta\|_{L^2(\mathcal{D}\times \Omega^+)}^2\\
        &+
        \varepsilon^2\big(\|\tilde{j}_1\|_{L^2(\mathcal{D}\times \Omega^+)}^2
        +
        \|\tilde{j}_2\|_{L^2(\mathcal{D}\times \Omega^+)}^2\big)
        \Big)
        +
        E(t),
    \end{aligned}
\end{equation*}
where 
\begin{equation*}
\begin{aligned}
    E(t)
    =&
    \pi\|\tilde{\rho}\|_{L^2(\mathcal{D})}^2
        +
        \varepsilon^4\|\tilde{w}\|_{L^2(\mathcal{D}\times \Omega^+)}^2\\
        &+
        \|\tilde{\varphi}\|_{L^2(\mathcal{D}\times \Omega^+)}^2
        +
        \varepsilon^2\|\tilde{j}_1+\tilde{j}_2\|_{L^2(\mathcal{D}\times \Omega^+)}^2
        +
        \varepsilon^2\|\tilde{j}_2-\tilde{j}_1\|_{L^2(\mathcal{D}\times \Omega^+)}^2.
        \end{aligned}
\end{equation*}
Therefore, an application of Gronwall's inequality concludes the proof.
 
\end{proof}

\section{Structure-preserving mechanism}\label{Sec: Structure-preserving mechanism}
The structure-preserving mechanism plays a crucial role in ensuring that the neural approximation retains the intrinsic symmetries and conservation properties of the AP system. 
In the context of neural networks, it is particularly important to guarantee that the learned representations are consistent with the analytical properties of the underlying kinetic model, especially in the diffusive limit $\varepsilon \to 0$.

\subsection{One-dimensional parity symmetry, conservation and positivity}
For the AP system~\eqref{Def: APNN system}, the variables $(\rho, j, w)$ correspond to the macroscopic density, the odd flux, and the higher-order even correction. 
The neural network approximations $(\rho_\theta, j_\theta, w_\theta)$ are constructed to respect the following relations
\begin{equation}
j_\theta(t,x,-v) = -\,j_\theta(t,x,v), 
w_\theta(t,x,-v) = w_\theta(t,x,v),
\langle w_\theta(t,x,\cdot) \rangle = 0,
\end{equation}
which are essential for preserving the equivalence between kinetic and macroscopic moments. These relations are explicitly embedded in neural architectures as follows:
\begin{itemize}
\item The flux network $j_\theta$ is enforced to be odd in $v$ by the antisymmetric combination
\begin{equation*}
    j_\theta(t,x,v) = \tfrac{1}{2}\big[ j_\text{net}(t,x,v) - j_\text{net}(t,x,-v) \big],
\end{equation*}
ensuring that the velocity reversal symmetry is exactly preserved at the network level.

\item The correction term $w_\theta$ is constructed to be even in $v$ and to satisfy the zero-average constraint by
\begin{equation*}
    w_\theta(t,x,v) = \tfrac{1}{2}\Big[
    w_\text{net}(t,x,v) - \langle w_\text{net}(t,x,v) \rangle + w_\text{net}(t,x,-v) - \langle w_\text{net}(t,x,-v) \rangle
\Big],
\end{equation*}
which guarantees $\langle w_\theta \rangle = 0$ for all $(t,x)$.

\item The macroscopic density $\rho_\theta$ is constrained to remain positive through the transformation $\rho_\theta = \log(1 + \exp(\rho_{\text{net}}))$.
This use of the softplus activation function naturally bounds the gradient and mitigates numerical instabilities.
\end{itemize}
Here, $j_\text{net}, w_\text{net}, \rho_{\text{net}}$ denote the raw outputs of the neural submodules before enforcing these structures. 

\subsection{Two-dimensional parity symmetry, conservation and positivity}
For the AP system \eqref{Def: APNN system 2D} with velocity variable $(\xi,\eta)$ on the unit circle satisfying $\xi^2+\eta^2=1$, the structure-preserving mechanism must account for the more complex parity structure introduced by the decomposition with respect to the vector
$(\xi, \eta)$. Recall that \eqref{Def: APNN system 2D} involves five variables: the macroscopic density $\rho$, the variable $\varphi$, two odd fluxes $j_1, j_2$, and the higher-order even correction $w$.
The neural network approximations are constructed to satisfy the following symmetry and conservation properties:
\begin{equation}
\begin{aligned}
    & \varphi_\theta(t, \boldsymbol{x}, \xi, \eta) = \varphi_\theta(t, \boldsymbol{x}, -\xi, -\eta) = -\varphi_\theta(t,\boldsymbol{x},-\xi,\eta) = -\varphi_\theta(t,\boldsymbol{x},\xi,-\eta), \\
    & j_{1,\theta}(t, \boldsymbol{x}, \xi, \eta) = -j_{1,\theta}(t, \boldsymbol{x}, -\xi, -\eta), \, j_{2,\theta}(t, \boldsymbol{x}, \xi, \eta) = -j_{2,\theta}(t, \boldsymbol{x}, -\xi, -\eta), \\
    & w_\theta(t, \boldsymbol{x}, \xi, \eta) = w_\theta(t, \boldsymbol{x}, -\xi, -\eta) = w_\theta(t, \boldsymbol{x}, -\xi, \eta) = w_\theta(t, \boldsymbol{x}, \xi, -\eta), \\
    & \langle w_\theta(t, \boldsymbol{x}, \xi, \eta) \rangle = 0 .
\end{aligned}
\end{equation}
These properties are enforced in the neural architecture as follows:
\begin{itemize}
    \item The anisotropy term $\varphi_\theta$ is constructed to satisfy the required parity structure by combining evaluations at all four symmetric velocity points:
    \begin{equation*}\label{eq:phi_structure}
    \begin{aligned}
        \varphi_\theta(t,\boldsymbol{x},\xi,\eta) = \tfrac{1}{2}\Big[&\varphi_{\text{net}}(t,\boldsymbol{x},\xi,\eta) + \varphi_{\text{net}}(t,\boldsymbol{x},-\xi,-\eta) \\
        &- \varphi_{\text{net}}(t,\boldsymbol{x},-\xi,\eta) - \varphi_{\text{net}}(t,\boldsymbol{x},\xi,-\eta)\Big],
    \end{aligned}
    \end{equation*}
    ensuring $\varphi_\theta(t,\boldsymbol{x},\xi,\eta) = \varphi_\theta(t,\boldsymbol{x},-\xi,-\eta)$ and $\varphi_\theta(t,\boldsymbol{x},\xi,\eta) = -\varphi_\theta(t,\boldsymbol{x},-\xi,\eta)$. 
    
    \item The fluxes $j_{1,\theta}$ and $j_{2,\theta}$ are enforced to be odd with respect to the transformation $(\xi,\eta) \to (-\xi,\eta)$ by
    \begin{equation*}\label{eq:j_structure}
    \begin{aligned}
        j_{1,\theta}(t,\boldsymbol{x},\xi,\eta) &= \tfrac{1}{2}\left[j_{1,\text{net}}(t,\boldsymbol{x},\xi,-\eta) - j_{1,\text{net}}(t,\boldsymbol{x},-\xi,\eta)\right], \\
        j_{2,\theta}(t,\boldsymbol{x},\xi,\eta) &= \tfrac{1}{2}\left[j_{2,\text{net}}(t,\boldsymbol{x},\xi,\eta) - j_{2,\text{net}}(t,\boldsymbol{x},-\xi,-\eta)\right],
    \end{aligned}
    \end{equation*}
    which are consistent with the definitions from the previous decomposition. 
    
    \item The higher-order correction $w_\theta$ is constructed to be even with respect to all velocity reflections and to satisfy the zero-average constraint. This is achieved through the symmetrized construction:
    \begin{equation*}\label{eq:w_structure_2d}
    \begin{aligned}
        w_\theta(t,\boldsymbol{x},\xi,\eta) = \tfrac{1}{2}\Big[&w_{\text{net}}(t,\boldsymbol{x},\xi,\eta) - \langle w_{\text{net}}(t,\boldsymbol{x},\xi,\eta)\rangle \\
        &+ w_{\text{net}}(t,\boldsymbol{x},-\xi,-\eta) - \langle w_{\text{net}}(t,\boldsymbol{x},-\xi,-\eta)\rangle \\
        &+ w_{\text{net}}(t,\boldsymbol{x},-\xi,\eta) - \langle w_{\text{net}}(t,\boldsymbol{x},-\xi,\eta)\rangle \\
        &+ w_{\text{net}}(t,\boldsymbol{x},\xi,-\eta) - \langle w_{\text{net}}(t,\boldsymbol{x},\xi,-\eta)\rangle\Big],
    \end{aligned}
    \end{equation*}
    guaranteeing both the even parity with respect to all coordinate reflections and the constraint $\langle w_\theta \rangle = 0$. The zero-average property is essential for maintaining consistency with the decomposition $\frac{r_1+r_2}{2} = \rho + \varepsilon^2 w$.
    
    \item As in the one-dimensional case, the macroscopic density $\rho_\theta$ is enforced to remain positive through the transformation $\rho_\theta = \log(1 + \exp(\rho_{\text{net}}))$, ensuring physical consistency by construction.
\end{itemize}
Here, $\varphi_{\text{net}}$, $j_{1,\text{net}}$, $j_{2,\text{net}}$, $w_{\text{net}}$, and $\rho_{\text{net}}$ denote the raw outputs of the corresponding neural submodules before enforcing the structural constraints. 

By integrating these constraints directly into the neural representation, our framework not only reproduces the correct asymptotic behavior but also retains the fundamental structure of the parity equations, thereby ensuring both accuracy and physical consistency across multiple scales.

\section{Numerical experiments}\label{Sec: Numerical experiments} 
In this section, we present a series of numerical experiments performed for both the rarefied and diffusive regimes of one- and two-dimensional RTEs, to demonstrate the effectiveness of the proposed framework and validate the theoretical analysis.

For these experiments, we employ a four-block AdaptiveResNet architecture, using $128$ and $256$ hidden units for the one- and two-dimensional problems, respectively. An $L$ layer AdaptiveResNet is proposed as follows:
\begin{equation}
    \begin{aligned}
        g_{\theta}^{[0]}(z) & = W^{[0]} z + b^{[0]},   \\
        g_{\theta}^{[l]}(z) & = \text{AdaptiveResidualBlock}^{[l]}(g_{\theta}^{[l-1]}(z)), \, 1 \le l \le L-1, \\
        g_{\theta}(z)       & = g_{\theta}^{[L]}(z) = W^{[L-1]} g_{\theta}^{[L-1]}(z) + b^{[L-1]},
    \end{aligned}
\end{equation}
where each adaptive residual block is defined as
\begin{equation}
    \begin{aligned}
        & \text{AdaptiveResidualBlock}^{[l]}(g_{\theta}^{[l-1]}(z)) = \beta^{[l]} \cdot g_{\theta}^{[l-1]}(z) + (1 - \beta^{[l]}) \cdot h_{\theta}^{[l]}(z), \\
        & \qquad h_{\theta}^{[l]}(z) = \sigma \circ \left( W_2^{[l-1]} \sigma \circ ( W_1^{[l-1]} g_{\theta}^{[l-1]}(z) + b_1^{[l-1]} ) + b_2^{[l-1]} \right)
    \end{aligned}
\end{equation}
with $\beta^{[l]} \in [0.05, 1.0]$ being a learnable adaptive parameter for the $l$-th residual block. Each $\beta^{[l]}$ is initialized to $0.9$ and optimized during training. To ensure numerical stability, the value of each $\beta^{[l]}$ is constrained to the interval $[0.05, 1.0]$ via a clamping operation.
Here, $W_1^{[l]}, W_2^{[l]} \in \mathbb{R}^{m_{l+1} \times m_l}$ and $b_1^{[l]}, b_2^{[l]} \in \mathbb{R}^{m_{l+1}}$ are weight matrices and bias vectors, respectively. The input dimension is $m_0 = d_{\text{in}} = d$, and the output dimension is $m_L = d_0$. 
The activation function is chosen as $\sigma = \text{tanh}(\cdot)$, 
which is applied in the element-wise sense, as indicated by ``$\circ$''.
The collection of all network parameters, including $\{\beta^{[l]}\}_{l=1}^{L-1}$, is represented by $\theta$.

We fix the spatial domain to $[0, 1]^d$, and an exact periodic boundary condition is enforced to improve the numerical performance.
Taking the one dimensional case as an example, the ansatz is constructed based on a Fourier basis, where a transform $T: x \to \tilde{x} = \{\sin( 2\pi k x), \cos(2\pi k x)\}_{k = 1}^P$ is applied before the first layer of the deep neural network~\cite{han2020solving}. The extension to higher-dimensional cases follows naturally.
The averaging operator is evaluated numerically using a $16$-point Gauss–Legendre quadrature rule.

For network training, we employ the Adam optimizer with Xavier initialization. 
In each iteration, $4096$ random sample points are drawn from the spatial domain and $1024$ from the initial condition.
Except for the initial condition term, which is assigned a weight of $10$, all other loss terms are assigned a default weight of $1$.
To enhance training stability and convergence, we adopt an exponentially decaying learning rate schedule starting from
$\eta_0 = 10^{-3}$ with a decay rate of $\gamma = 0.96$ and a decay step of $p = 2500$ iterations, $\eta_{it} = \eta_0 \cdot \gamma^{\lfloor \frac{it}{p} \rfloor},$
where $it$ denotes the current iteration index, and the symbol $\lfloor \cdot \rfloor$ is the floor function.

The reference solutions are computed using a finite difference method~\cite{jin2000uniformly},
and we will check the relative $\ell^2$ error of the solution $s(x)$ of our method, e.g., for the one-dimensional case,
\begin{equation}
    \text{error} := \sqrt{
\sum_j |s_\theta^{j} - s_{\text{ref}}^j|^2 /
    {\sum_j |s_{\text{ref}}^j|^2}},
\end{equation}
where $s_\theta$ is the neural solution, and $s_{\text{ref}}$ is the reference solution.
Here, the superscript $j$ indexes the collocation points at which both $s_\theta$ and $s_{\text{ref}}$ are evaluated.

\subsection{One-dimensional example}
We first consider the one-dimensional RTE, testing the proposed framework across regimes ranging from the rarefied case ($\varepsilon = 1$) to the diffusive limit ($\varepsilon = 10^{-4}$). The computational domain is $x \in [0,1]$ with periodic boundary condition, and the initial condition is $f_{\rm IC}(x, v) = p(x)\exp(-v^2/2)/\sqrt{2\pi}$, where $p(x) = 1 + \cos(4\pi x).$

Fig.~\ref{fig: rte1d_history_1.0e+00} and Fig. \ref{fig: rte1d_rho_1.0e+00} show the training history and the evolution of the macroscopic density $\rho(t, x)$ for $\varepsilon = 1$ at $t = 0.05$ and $t = 0.1$. The loss and relative $\ell^2$ error decrease steadily as training proceeds, and the predicted densities closely match the reference finite difference solutions. 
This demonstrates that the proposed method effectively captures the kinetic behavior in the rarefied regime.
Similarly, the results for the diffusive regime ($\varepsilon = 10^{-4}$) are shown in Fig.~\ref{fig: rte1d_history_1.0e-04} and Fig. \ref{fig: rte1d_rho_1.0e-04}. As the regime approaches diffusion, the network continues to produce accurate solutions, maintaining numerical stability and precision even at smaller mean free path scales. The comparison between the neural and reference solutions again exhibits excellent agreement, verifying the robustness of the proposed method across multiscale regimes.

\begin{figure}[htbp!]
\centering
\subfigure[Training loss history]{
\includegraphics[width=0.4\textwidth]{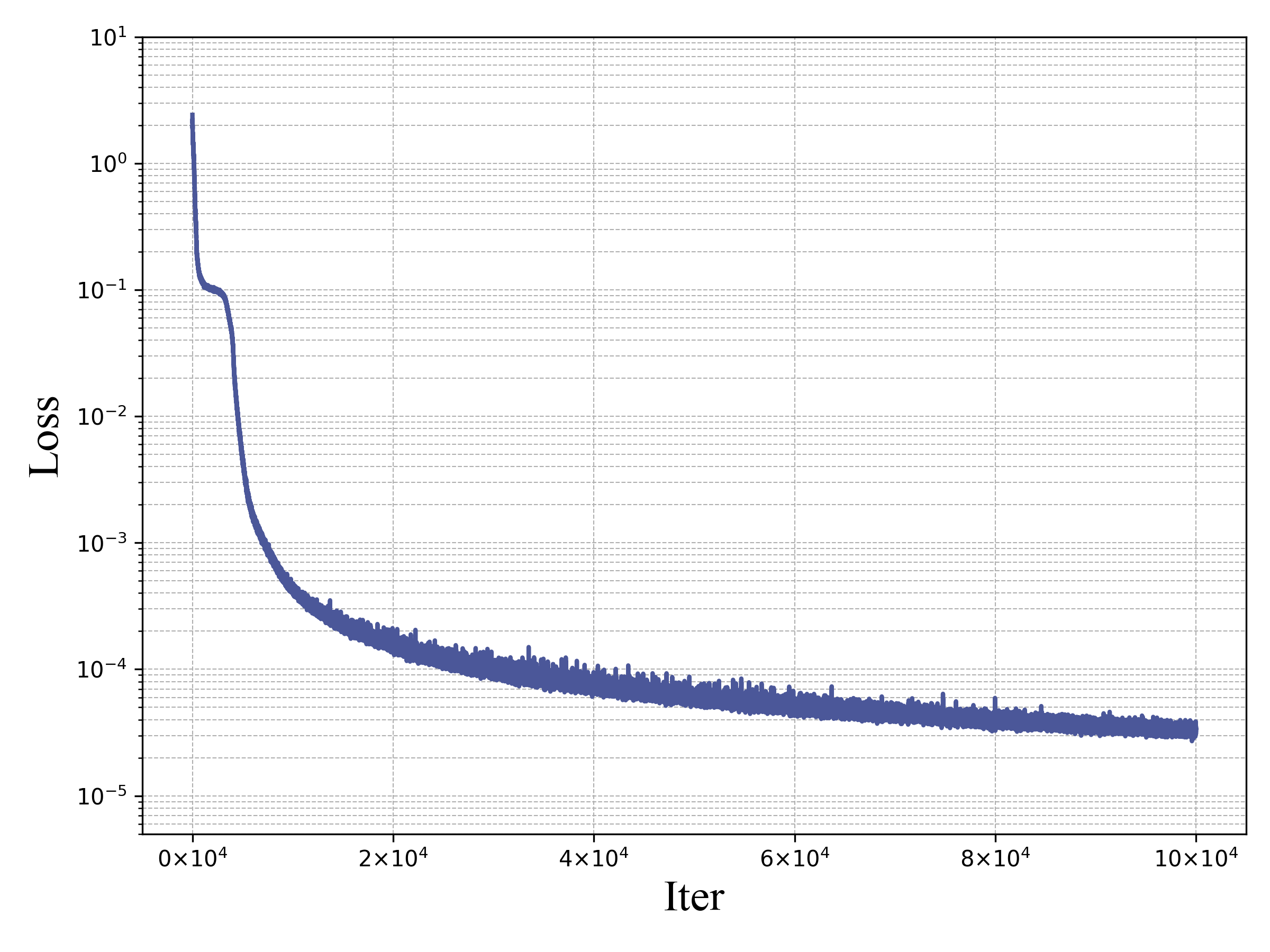}
}
\subfigure[Relative $\ell^2$ error]{
\includegraphics[width=0.4\textwidth]{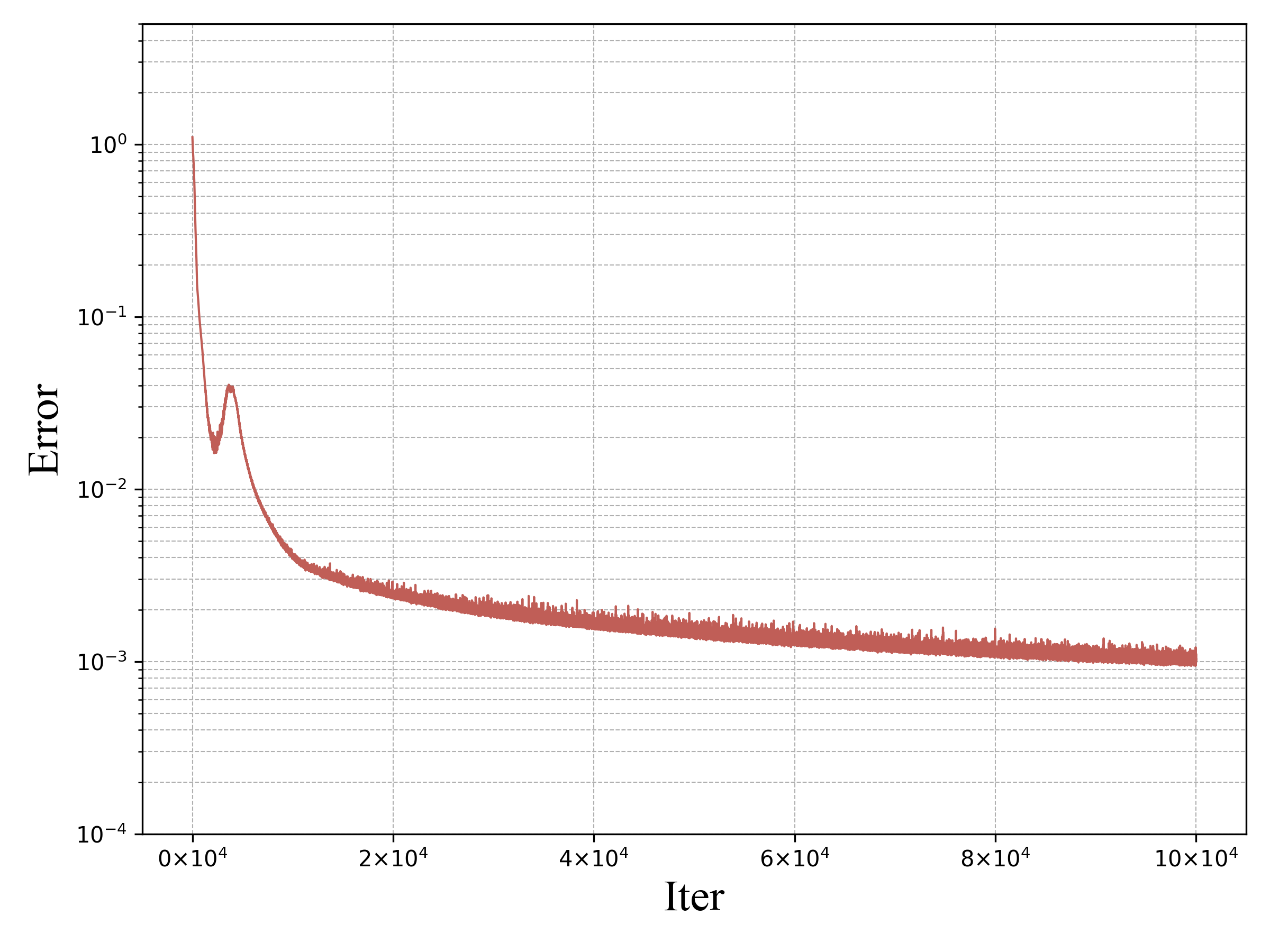}
}
\caption{Training loss and error evolution for the one-dimensional case ($\varepsilon = 1$). }
\label{fig: rte1d_history_1.0e+00}
\end{figure}

\begin{figure}[htbp!]
\centering
\includegraphics[width=0.5\linewidth]{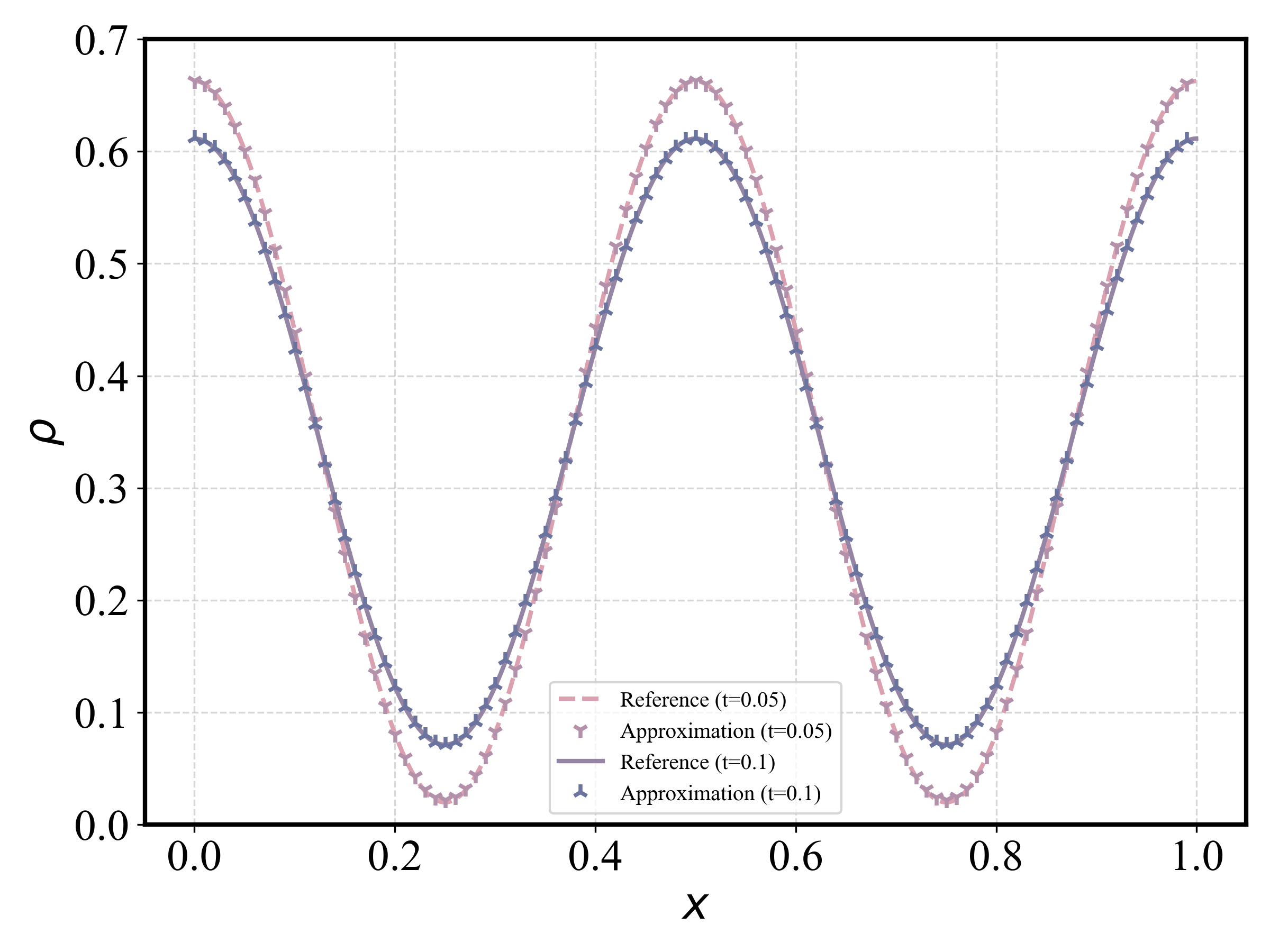}
\caption{Comparison between reference and neural network solutions at $t = 0.05$ and $t = 0.1$ ($\varepsilon = 1$).}
\label{fig: rte1d_rho_1.0e+00}
\end{figure}

\begin{figure}[htbp!]
\centering
\subfigure[Training loss history]{
\includegraphics[width=0.4\textwidth]{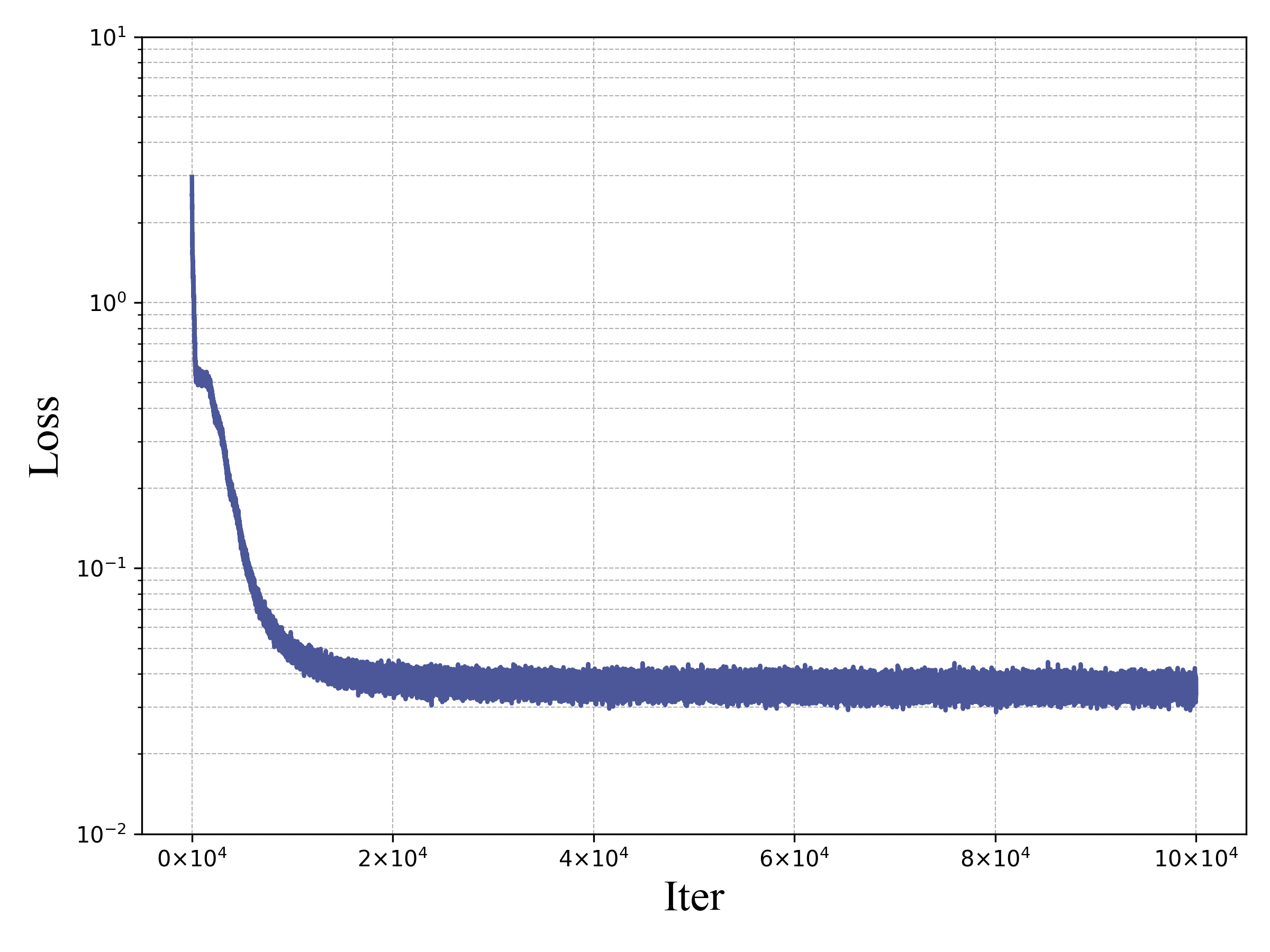}
}
\subfigure[Relative $\ell^2$ error]{
\includegraphics[width=0.4\textwidth]{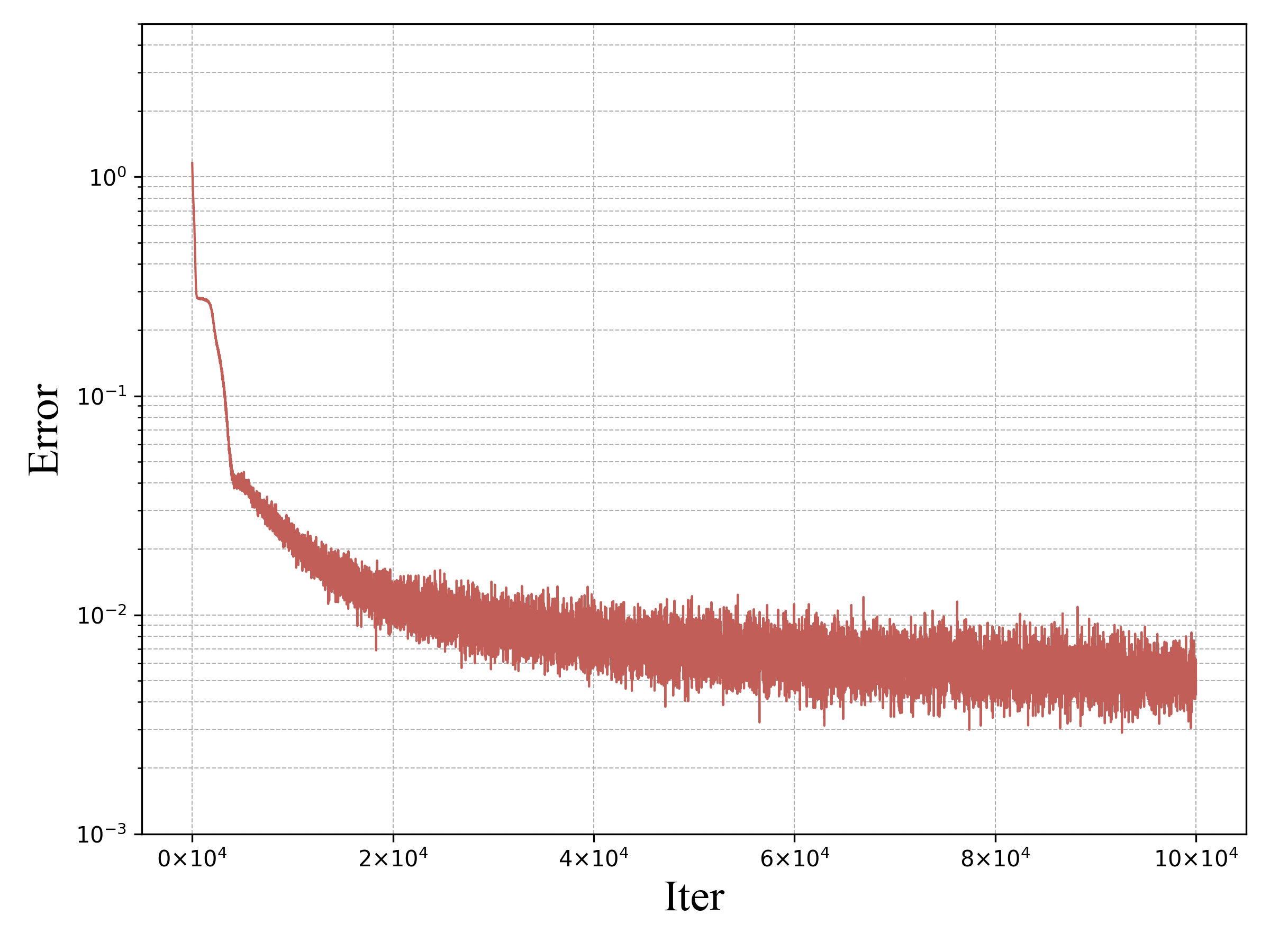}
}
\caption{Training loss and error evolution for the one-dimensional case ($\varepsilon = 10^{-4}$). }
\label{fig: rte1d_history_1.0e-04}
\end{figure}

\begin{figure}[htbp!]
\centering
\includegraphics[width=0.5\linewidth]{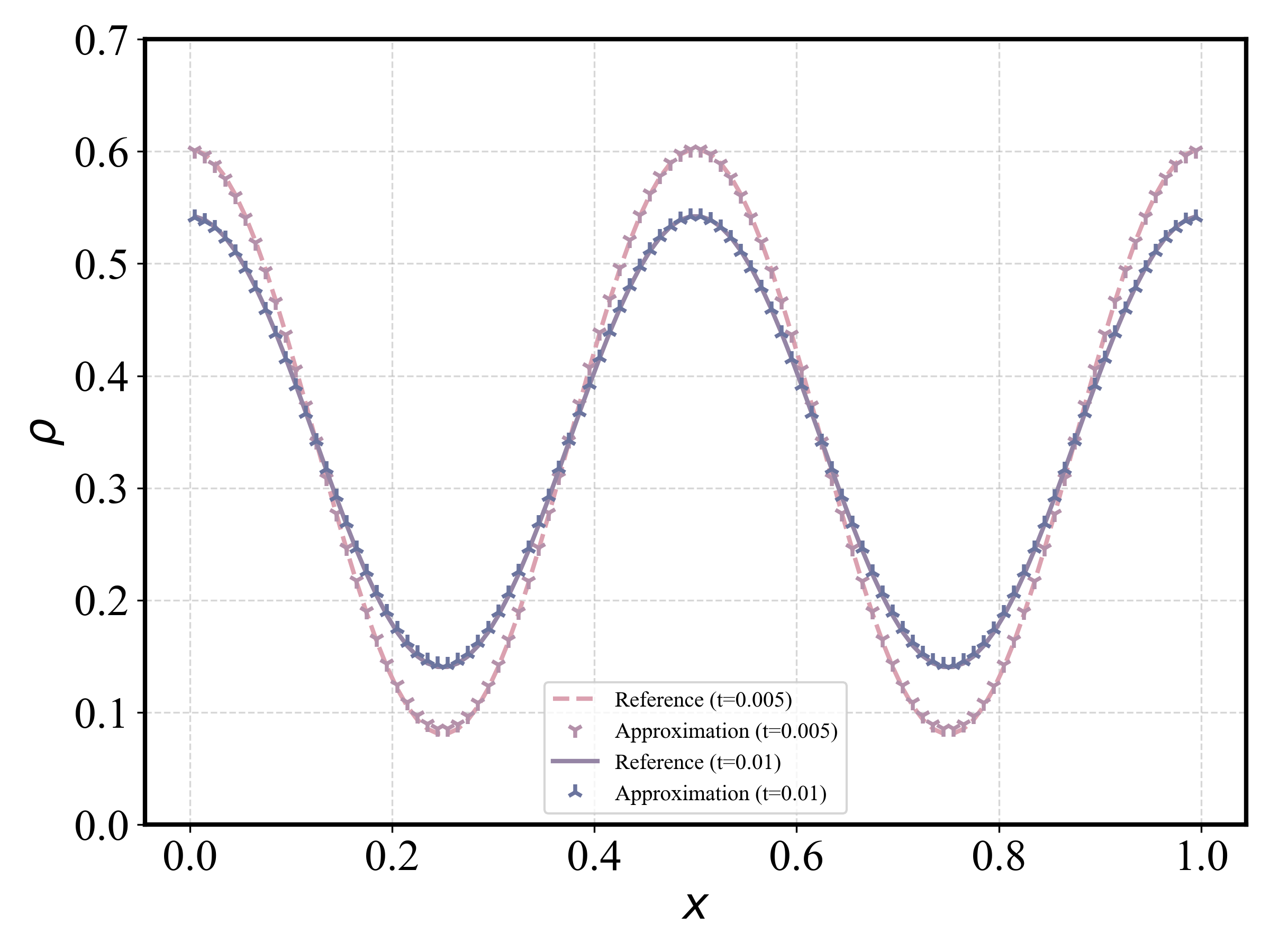}
\caption{Comparison between reference and neural network solutions at $t = 0.005$ and $t = 0.01$ ($\varepsilon = 10^{-4}$).}
\label{fig: rte1d_rho_1.0e-04}
\end{figure}

\subsection{Two-dimensional example}
We next extend the analysis to the two-dimensional RTE under periodic boundary conditions. The initial distribution is $f_{\rm IC}(\boldsymbol{x}, \boldsymbol{v})=p(\boldsymbol{x})\exp(-|\boldsymbol{v}|^2/2)/\sqrt{2\pi},$ where $p(\boldsymbol{x}) = 1 + (\cos(2\pi x_1) + \cos(2\pi x_2))/2.$

Fig.~\ref{fig: rte2d_rho_1.0e+00} and Fig. \ref{fig: rte2d_rho_1.0e-03} display the predicted and reference density fields $\rho(t, \boldsymbol{x})$ at time $t = 0.1$ for the rarefied regime ($\varepsilon = 1$) and the near-diffusive regime ($\varepsilon = 10^{-3}$), respectively. In both cases, our method accurately captures the spatial structure and amplitude of the density field, closely matching the reference solutions. These results confirm that the proposed adaptive framework generalizes effectively from one to two dimensions and maintains consistent performance across kinetic–diffusive transitions.

\begin{figure}[htbp!]
\centering
\includegraphics[width=0.75\linewidth]{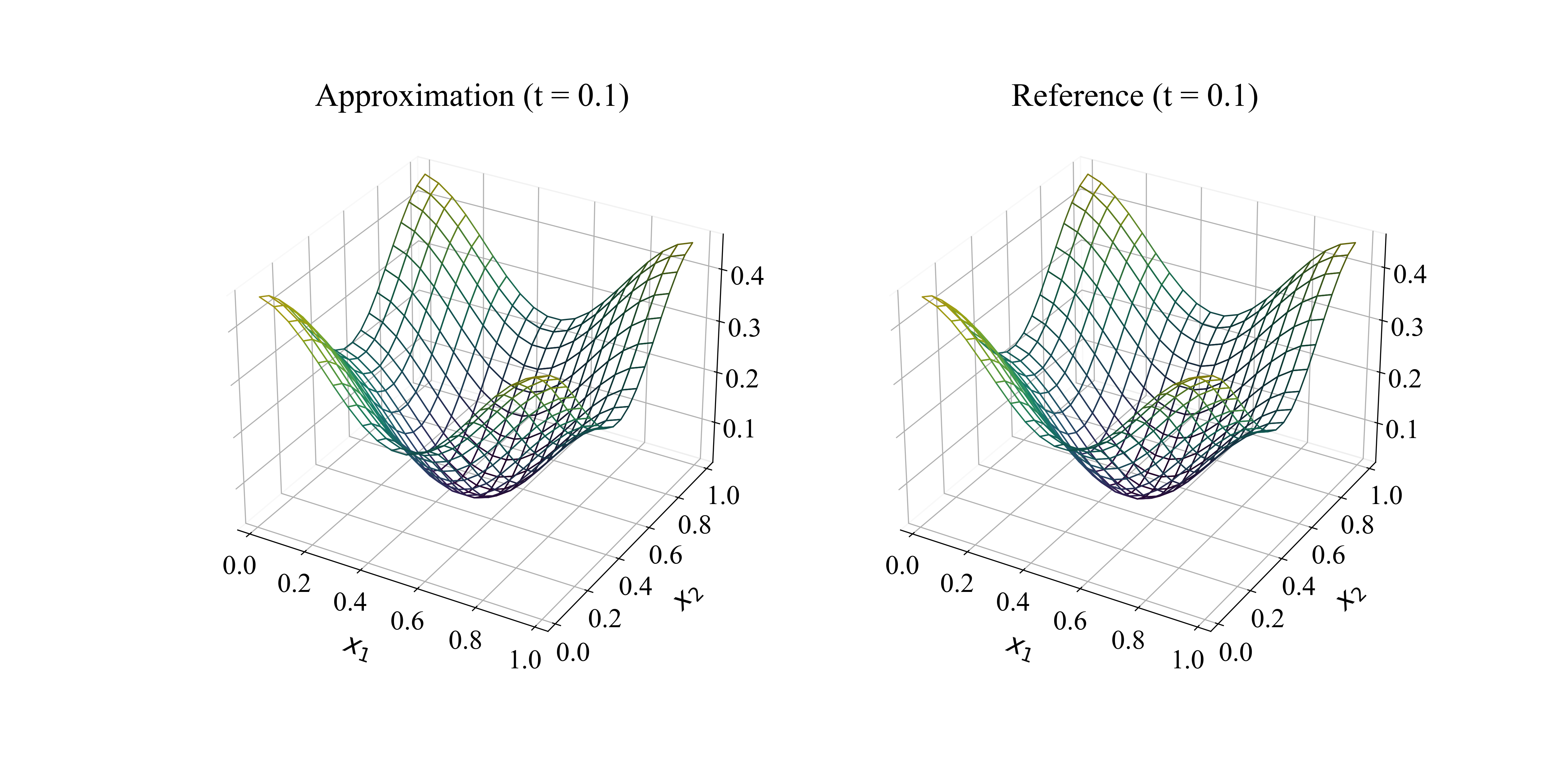}
\caption{Reference and approximate density fields at $t = 0.1$ ($\varepsilon = 1$).}
\label{fig: rte2d_rho_1.0e+00}
\end{figure}

\begin{figure}[htbp!]
\centering
\includegraphics[width=0.75\linewidth]{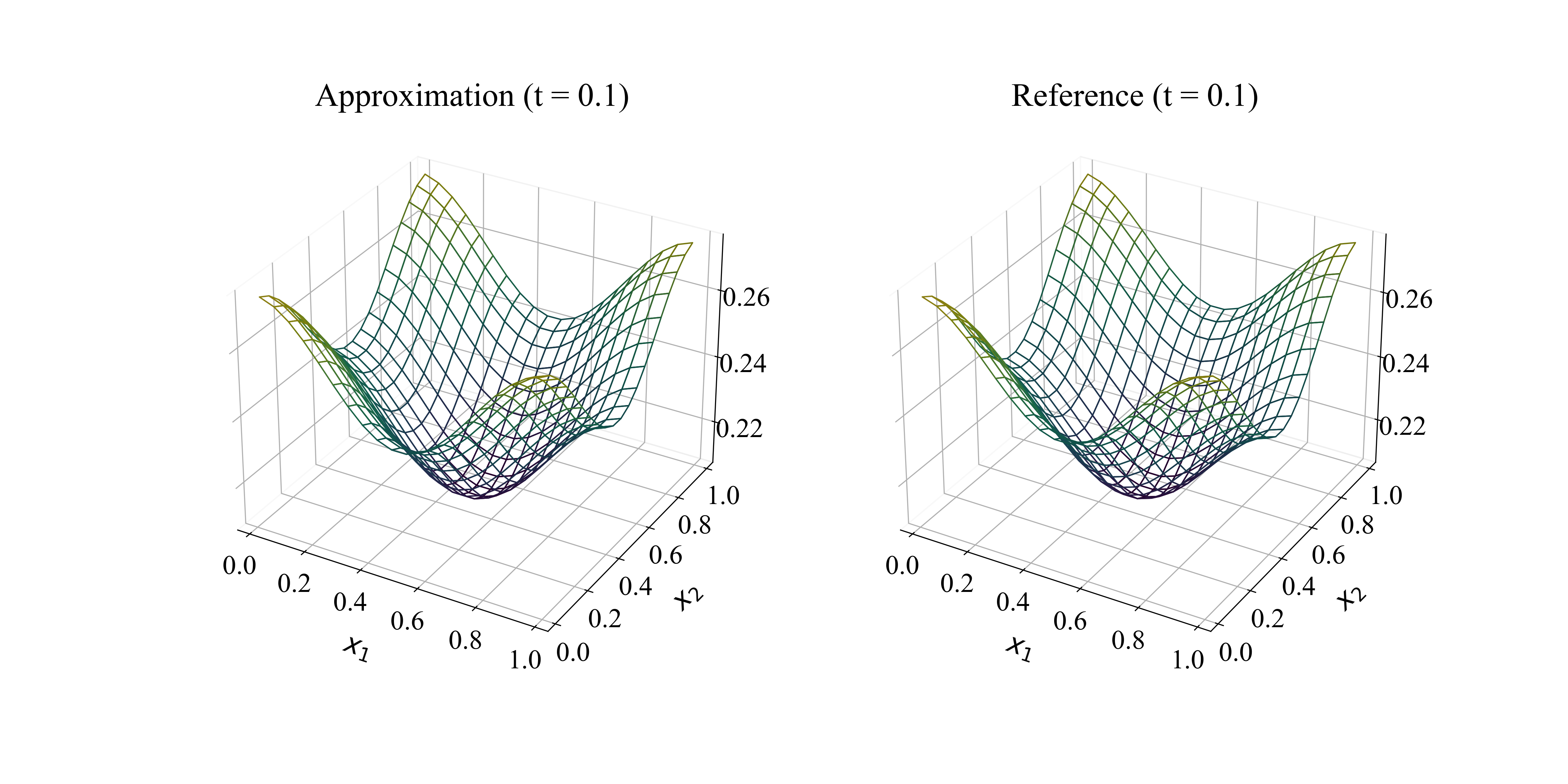}
\caption{Reference and approximate density fields at $t = 0.1$ ($\varepsilon = 10^{-3}$).}
\label{fig: rte2d_rho_1.0e-03}
\end{figure}

\section{Conclusions}\label{Sec: Conclusions}

In this paper, we have proposed a multiscale parity decomposition framework for solving radiative transfer equations. Both the theoretical analysis and numerical experiments demonstrate that the proposed framework achieves uniform error control while preserving the intrinsic structure of the system, thereby enabling stable and accurate computation for multiscale radiative transfer problems.
The main contributions of this work lie in the development of a structure-preserving and asymptotic-preserving neural network framework and the proof of uniform stability of the total error with respect to the Knudsen number, offering a viable approach for high-fidelity simulations of multiscale RTEs. The present study is limited to periodic boundary conditions; the extension to inflow and reflective boundary conditions will be addressed in future work. Moreover, the present work has focused on the one- and two-dimensional cases following \cite{jin2000uniformly}; extending the approach to three dimensions poses additional challenges due to increased geometric complexity, although the underlying theoretical framework is expected to remain consistent with the lower-dimensional settings. Furthermore, applying the proposed method to nonlinear kinetic systems remains an important direction for continued research.

\bibliographystyle{siamplain}
\bibliography{reference}

\end{document}